\def\const{\text{\rm const}}
\def\ti{\tilde}
\def\PW{\text{\rm PW}}
\def\to{\rightarrow}
\def\ms{\bigskip}
\def\ss{\smallskip}
\def\ms{\medskip}
\def\no{\noindent}
\def\R{{\mathbb R}}
\def\Z{{\mathbb{Z}}}
\def\C{{\mathbb{C}}}
\def\BB{{\mathcal B}}
\def\FF{{\mathcal F}}
\def\HH{{\mathcal H}}
\def\KK{{\mathcal K}}
\def\d{\delta}
\def\l{\lambda}
\def\g{\gamma}
\def\const{\text{\rm const}}
\theoremstyle{plain}
\newtheorem{lemma}{Lemma}
\newtheorem{theorem}{Theorem}
\newtheorem{corollary}{Corollary}
\newtheorem{proposition}{Proposition}
\newtheorem{remark}{Remark}
\numberwithin{equation}{section}
\title{Etudes in  the inverse spectral problem, II}
\begin{document}

	\author{N.~Makarov}
	\address{California Institute of Technology\\
		Department of Mathematics\\
		Pasadena, CA 91125, USA}
	\email{makarov@its.caltech.edu}
	
	\author{A.~Poltoratski}
	\address{University of Wisconsin\\ Department of Mathematics\\ Van Vleck Hall\\
		480 Lincoln Drive\\
		Madison, WI  53706\\ USA }
	\email{poltoratski@wisc.edu}
	\thanks{The second author was partially supported by
		NSF Grant DMS-2244801.}
	
	\begin{abstract} We apply the approach developed in our previous papers \cite{MIF1, MP3, etudes} to obtain examples of solutions to the inverse spectral problem (ISP) for the canonical Hamiltonian system. One of our goals is to illustrate connections of ISP with classical tools of analysis, such as the Hilbert transform and solutions to the Riemann-Hilbert problem. A key role in our study is played by the systems with homogeneous and quasi-homogeneous spectral measures. We show how some of such systems give rise to families of Bessel functions.
	\end{abstract}

\maketitle

\section{Introduction}

This note focuses on spectral problems for canonical Hamiltonian systems on the half-line. Our goal is to illustrate some of the methods and formulas developed in our earlier work with several examples and to outline connections with other problems of analysis. 

A {\it regular} half-line canonical (Hamiltonian) system is the equation
\begin{equation}
	\Omega\dot X=z\HH X\qquad {\rm on}\quad [0,\infty).\label{eq001A}
\end{equation}
Here the Hamiltonian $\HH=\HH(t)$ is  a  given $2\times 2$ matrix-function satisfying
$$\HH\in L^1_{\rm loc}[0,\infty),\quad \HH\ne 0\quad {\rm a.e.},\quad \HH\ge 0\quad {\rm a.e.}$$
The first relation means that the entries of $\HH$ are integrable on each finite interval. Systems satisfying this condition are called {\it regular}. 
The matrix $\Omega$ in \eqref{eq001} is the symplectic matrix $$\Omega=\begin{pmatrix} 0&1\\-1&0\end{pmatrix},$$ and $z\in\C$ is the 'spectral parameter'. The unknown function 
$X=X(t,z)$ is a two-dimensional vector-function on $[0,\infty)$.

The Krein-de Branges theory translates spectral problems for canonical systems in the language of complex analysis. It can be applied  to a wide range of problems in mathematical physics, including Schr"odinger operators, Dirac systems, and string equations. For a recent account of the theory see \cite{Rem}.

In the next section we survey the basic notions of the theory which will be used in this note. One of our goals is to illustrate the formulas and 
algorithms obtained in our earlier work \cite{MIF1, etudes, MP3} with several examples and connections. Special attention is paid to systems with homogeneous spectral measures.

The first part of our paper focuses on an application of the Riemann-Hilbert method to  an inverse spectral problem. We show how the 'truncated Toeplitz equation' obtained in \cite{etudes} can be transformed into a Riemann-Hilbert problem on a finite interval $[-t,t]$ for the simplest non-even example of a homogeneous spectral measure, 
 $\mu = c_1 m + c_2 \sigma$ where $\sigma(x) = \text{sign}(x)$, $m$ is the Lebesgue measure on $\R$ and $c_1,c_2$ are real constants, 
 $c_1>|c_2|$. 
 

The second part investigates homogeneous and quasi-homogeneous spectral measures in more generality. We establish that a measure $\mu$ is homogeneous if and only if all its associated de Branges spaces are homogeneous and we use the  scaling relation  for reproducing kernels to solve an inverse spectral problem. This part of the paper utilizes the generalized Hilbert transform defined in \cite{etudes}.
For quasi-homogeneous measures of order $\nu$, we obtain the universal relation for the corresponding reproducing kernels.

The third section connects homogeneous canonical systems with Bessel functions. We show that systems with Hamiltonians $H(t) = \text{diag}(t^m, t^{-m})$ lead to solutions expressible in terms of Bessel functions. Specifically, we prove that for such systems, the transfer matrix entries satisfy $A = F_{\nu-1}(zt)$ and $C = t^{2\nu} z F_\nu(zt)$ where $F_\nu$ is related to the Bessel function $J_\nu$ through $J_\nu(\lambda) = \lambda^\nu F_\nu(\lambda)$.

Throughout our analysis, we emphasize computational aspects, providing explicit formulas for Hamiltonian recovery.

The paper is organized as follows: Section 2 surveys the basics of the Krein-de Branges theory and the necesary formulas from our previous work. Section 3 presents the Riemann-Hilbert formulation of a particular  inverse spectral problem. Section 4 focuses on a further study of homogeneous and quasi-homogeneous measures. Section 5 connects canonical systems with Bessel functions and provides explicit solutions. 


\section{Preliminaries}

\subsection{Canonical systems and de Branges spaces} Instead of a two-dimensional vector function $X$ one may look for a $2\times 2$ matrix-valued solution
$M=M(t,z)$ solving \eqref{eq001A}. Such a matrix valued function satisfying the initial condition  
$M(0,z)=I$ is called the \textit{transfer matrix} or the \textit{matrizant} of the system. The columns of the transfer matrix $M$ are the solutions for the system \eqref{eq001A} satisfying the initial conditions $\begin{pmatrix}1\\0\end{pmatrix}$ (Neumann) and $\begin{pmatrix}0\\1\end{pmatrix}$ (Dirichlet) at 0.
As a general rule we denote
\begin{equation}M=\begin{pmatrix} A& B\\C&D\end{pmatrix}.\label{eqTM}\end{equation}

An entire function $F(z)$ belongs to the Hermite-Biehler (HB) class if 
$$|F(z)|>|F(\bar z)|\textrm{ for all }z\in \C_+.$$
We say that an entire function is real if it is real on $\R$.

For each fixed $t$, the entries of the transfer matrix $M$ of the system \eqref{eq001}, $A(z)=A(t,z)\equiv A_t(z)$, $B(z), C(z)$ and $D(z)$ are real entire functions. The functions
$$E:= A-iC,\qquad \tilde E:= B-iD$$
belong to the Hermite-Biehler class; see for instance \cite{Rem}.

For an entire function $G$ we denote by $G^\#$ its Schwarz reflection with respect to $\R$,
$G^\#(z)=\bar G(\bar z)$. We denote by $H^2$ the standard Hardy space in the upper half-plane.

For every Hermite-Biehler function $F$ one can consider the de Branges  (dB) space $\BB(F)$, a Hilbert space of
entire functions defined as
$$\BB(F)=\left\{G\ | G\text{ is entire, }\frac GF,\ \frac {G^\#}F\in H^2\right\}.$$
The Hilbert space structure in $\BB(F)$ is inherited from $H^2$:
$$<G,H>_{\BB(F)}=\left<\frac GF,\frac HF\right>_{H^2}=\int_{-\infty}^{\infty}G(t)\bar H(t)\frac{dt}{|F(t)|^2}.$$

The space $\BB(E)$ is a reproducing kernel Hilbert space, i.e., for each $\l\in\C$ there exists  $K(\l,\cdot)\in \BB(E)$ such that for any $F\in \BB(E)$,
$$F(\l)=<F,K(\l,\cdot)>_{\BB(E)}.$$
The function $K(\l,z)$ is called the reproducing kernel  (reprokernel) for the point $\l$. In the case of the dB-space $\BB(E)$, $K(\l,z)$  has the formula
$$K(\l,z)=\frac{1}{2\pi i}\frac{  E(z) E^\#(\bar \l)- E^\#(z)E(\bar \l)}{\bar \l -z }=\frac{1}{\pi }\frac{A(z) C(\bar \l) - C(z)A(\bar \l))}{\bar \l-z},
$$
where $A=(E+E^\#)/2$ and $C=(E^\#-E)/2i$ are real entire functions such that $E=A-iC$. 

The functions $E,\ \ti E$ corresponding to a canonical system \eqref{eq001A} give rise to the family
of dB-spaces 
$$\BB_t=\BB(E(t,\cdot)),\qquad \tilde\BB_t=\BB(\tilde E(t,\cdot)).$$

A value $t$ is $\HH$-regular if it does not belong to an open interval on
which $\HH$ is a constant matrix of rank one. The spaces $\BB_t,\ti \BB_t$
form {\it chains}, i.e., $\BB_s\subsetneq \BB_t$ for $s<t$ and the inclusion
is isometric for regular $t$ and $s$.

Special role in our formulas is played by the kernels at 0:  $K_t(z,0)$ and $\tilde K_t(z,0)$
We denote $k_t(z)=K_t(z,0)$.

%
%

\ms \subsection{Spectral measures}\label{secSM} There are several ways to introduce spectral measures of canonical systems. We'll make a simplifying assumption that the system has no "jump intervals", i.e., intervals on which the Hamiltonian is rank one and constant. In this case all $t\in [0,\infty)$ are
$\HH$-regular and all inclusions $\BB_s\subset\BB_t, \ti\BB_s\subset\ti\BB_t$
are isometric.
We can make this assumption because we will be mostly concerned with the case $\det\HH\neq 0$ a.e.


A  measure $\mu$ on $\R$ is called Poisson-finite ($\Pi$-finite) if  $$\int\frac{d|\mu|(x)}{1+x^2}<\infty.$$
A measure on $\hat R=\R\cup \{\infty\}$ is $\Pi$-finite if it is the sum of a $\Pi$-finite measure on $\R$ and a finite point mass at infinity.

By definition, a positive measure $\mu$ on $\R$ is a spectral measure of the CS \eqref{eq001} with the initial condition $\begin{pmatrix}1\\0\end{pmatrix}$ at $t=0$ if
$$\forall t,\qquad \BB_t\stackrel{\rm iso}{\subset} L^2(\mu).$$
(The definition is slightly more complicated in presence of jump intervals for the Hamiltonian, which we do not allow in this paper.)
It is well-known that spectral measures of regular CS are $\Pi$-finite; see for instance \cite{Rem}. In a similar way, using $\ti B_t$, one can define a spectral measure
$\ti \mu$ for the initial condition  $\begin{pmatrix}0\\1\end{pmatrix}$. 

Conversely, one of the main results of the Krein-de Branges theory says that every positive $\Pi$-finite
measure is a spectral measure of a regular CS. In general, the corresponding system may not satisfy $\det \HH\neq 0$ a.e., the restriction we are assuming in this article. Also,
HB functions corresponding to the systems considered in this paper have no zeros on the
real line. We will assume this restriction in our general
discussions of dB-spaces. (If $E$ vanishes at some point of $\R$, then all functions in $\BB(E)$ must vanish at the same point, as follows from the definition. PW-type spaces discussed in this note clearly do not have such a property.)

Every regular canonical system has a spectral measure; in fact for $\mu$ we can take any limit point of the family of measures $|E_t|^{-2}$ as $t\to \infty$, \cite{dB}. The spectral measure may or may not be unique. It is unique iff
\begin{equation*}\int{\rm trace}\ \HH(t)dt=\infty.\end{equation*}
The case when the spectral measure is unique is called the limit point case and the case when
it is not, the limit circle case.

Finally, let us mention that spectral measures are invariant with respect to  'time' parametrizations, i.e., a  change of variable $t$ in the initial system \eqref{eq001} via an increasing homeomorphism $t\mapsto s(t)$ does not
change the spectral measure.

\ms\subsection{Paley-Wiener spaces}\label{secPW}

We will use the following
definition for the Fourier transform in $L^2(\R)$:
$$ (\FF f)(\xi)\equiv \hat f(\xi)=\frac1{\sqrt{2\pi}}\int e^{-i\xi x}f(x)dx,$$
first defined on test functions and then extended to a unitary operator
$\FF L^2(\R)\to L^2(\R)$ via Parseval's theorem. The Paley-Wiener space $\PW_t$ of entire functions is defined as the image
$${\rm PW}_t=\FF L^2[-t,t].$$
By the Paley-Wiener theorem, $\PW_t$ can be equivalently defined as
the space of entire functions of exponential type at most $t$ which belong
to $L^2(\R)$. The Hilbert structure in $\PW_t$ is inherited from $L^2(\R)$. 

Paley-Wiener spaces appear as the de Branges chain $\BB_t$ for the free system $\HH=I$.

\ms\subsection{PW systems and PW measures}

Let $\HH$ be a Hamiltonian of a canonical system \eqref{eq001A}. We say that $\HH$ is of PW-type ($\HH\in \PW$)  if for any $t>0$ there exists $s=s(t)>0$ such that $s(t)\to\infty$ as $t\to\infty$ and the spaces $\BB_t(\HH)$ and ${\rm PW}_s$ are equal as sets (with possibly different norms), 
\begin{equation}\BB_t(\HH)\doteq {\rm PW}_s.\label{eq20}\end{equation}
We call the corresponding system a PW-type system.

All regular Dirac systems (systems with locally summable potentials) are of PW-type as follows from Lemma 3.1 in \cite{etudes}
and the results of \cite{LS}; see also \cite{BR}. The class of all $PW$-type systems is significantly broader than the class considered in the classical Gelfand-Levitan theory; see \cite{etudes} for further discussion.

By definition, a positive measure $\mu$ on $\R$ is  PW-sampling  ($\mu\in\PW$) if it is sampling for all Paley-Wiener spaces PW$_t$:
$$\forall t\quad\exists C>0,\qquad \forall f\in {\rm PW}_t, \qquad C^{-1}\|f\|\le \|f\|_{L^2(\mu)}\le C\|f\|.$$

The set of all PW-sampling measures admits the following elementary description:

Given $\mu$ and $\delta>0$ we say that an interval $l\subset\R$ is $\delta$-{\it massive} with respect to $\mu$ if
$$\mu(l)\ge \delta\textrm{ and } |l|\ge\delta.$$
The $\delta$-{\it capacity}   of an interval $I\subset\R$ with respect to $\mu$, denoted by $C_\delta(I)$, is the maximal number of disjoint $\delta$-massive intervals intersecting $I$.

\ms

\begin{theorem}[\cite{etudes}]\label{PWmu} $\mu$ is  PW-sampling if and only if
	
	(i) For any $x\in \R$, $\mu(x,x+1)\le \const$;
	
	(ii) For any $t>0$ there exist $L$ and $\d$ such that for all $I,\ |I|\ge L$,\newline
	$$C_\delta(I)\ge t|I|.$$
\end{theorem}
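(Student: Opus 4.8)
The sampling condition is the conjunction of two inequalities, and these decouple cleanly: the upper bound $\|f\|_{L^2(\mu)}\le C\|f\|$ for all $f\in \PW_t$ is a Carleson-type embedding controlled by (i), while the lower bound $\|f\|\le C\|f\|_{L^2(\mu)}$ is the genuine sampling estimate controlled by the density condition (ii). The plan is therefore to establish the two equivalences ``the upper bound holds for every $\PW_t$ $\iff$ (i)'' and ``the lower bound holds for every $\PW_t$ $\iff$ (ii)'' and then combine them.

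For the upper bound I would use the standard fact that a positive measure embeds every $\PW_t$ boundedly into $L^2(\mu)$ exactly when $\sup_x\mu(x,x+1)<\infty$. Necessity is immediate: fixing one nonnegative $g\in\PW_1$ with $|g|\ge 1$ on $[-\tfrac12,\tfrac12]$, the translates $g(\cdot-x)$ have constant norm, so $\mu(x-\tfrac12,x+\tfrac12)\le \int|g(\cdot-x)|^2\,d\mu\le C^2\|g\|^2$ uniformly in $x$. For sufficiency I would cover $\R$ by unit intervals $I_k$ and invoke Bernstein's inequality, which bounds the oscillation of $|f|^2$ on each $I_k$ by its $L^2(m)$-content over a fixed dilate $I_k^*$; then $\int|f|^2\,d\mu\le\sum_k\mu(I_k)\sup_{I_k}|f|^2\lesssim (\sup_k\mu(I_k))\sum_k\int_{I_k^*}|f|^2\,dm\lesssim\|f\|^2$.

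The lower bound is the substantial part. For sufficiency, suppose it fails for $\PW_s$, so there are unit-norm $f\in\PW_s$ with $\int|f|^2\,d\mu$ arbitrarily small. Apply (ii) with density parameter $t$ a fixed multiple of $s$ (above the Nyquist threshold $\sim s/\pi$) to obtain $L,\delta$ so that every long interval carries at least $t$ disjoint $\delta$-massive intervals per unit length. The mechanism I would use is a reduction from sampling by the measure to sampling by a sequence. Writing $\int_{l_j}|f|^2\,d\mu=\e_j$ with $\sum_j\e_j$ small, on each $\delta$-massive $l_j$ (where $\mu(l_j)\ge\delta$) a below-$\mu$-average point $x_j\in l_j$ satisfies $|f(x_j)|^2\le \e_j/\delta$, whence $\sum_j|f(x_j)|^2$ is small. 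By (ii) the points $\{x_j\}$, after thinning to a separated subsequence, have lower Beurling density exceeding $s/\pi$, so by Beurling's sampling theorem they form a sampling sequence for $\PW_s$ and $\|f\|^2\lesssim\sum_j|f(x_j)|^2$ is small as well, contradicting $\|f\|=1$.

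For necessity of (ii) I would argue by contrapositive: if (ii) fails for some $s$, then for every $L,\delta$ there are arbitrarily long intervals with $C_\delta(I)<s|I|$, so the $\delta$-massive intervals are too sparse and $\mu$ is carried by few short pieces in the resulting gaps; in such a gap one can build $f\in\PW_s$ of large $L^2(m)$-norm whose mass avoids the support of $\mu$, violating the lower estimate. I expect the \emph{main obstacle} to be precisely this lower-bound pair. The delicate point is that no single massive interval yields a local reverse estimate, since the mass of $\mu$ inside $l_j$ may sit exactly where $f$ vanishes; the argument must be global, extracting one below-average point per interval and leaning on the density condition together with Beurling's sequence-sampling theorem. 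Dually, the explicit construction of band-limited functions of controlled exponential type that are large yet evade a sparse measure is the technical core of the necessity direction.
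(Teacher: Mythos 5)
A preliminary remark: the paper you were given does not prove Theorem \ref{PWmu} at all --- it is imported from \cite{etudes} as a black box --- so there is no in-text proof to compare yours against, and I am judging the proposal on its own terms. Your decomposition is the standard (and surely the intended) one: the upper, Carleson-type bound is equivalent to (i) by the Plancherel--P\'olya translation argument, and your treatment of that half is complete. The sufficiency of (ii) for the lower bound --- pick a below-$\mu$-average point in each massive interval, observe $\sum_j|f(x_j)|^2\le\delta^{-1}\int|f|^2\,d\mu$ by disjointness, and invoke the Beurling density criterion --- is also essentially right. The two loose ends there (extracting a separated subsequence without losing too much density, and patching the local families of disjoint massive intervals over different windows into a single global family) are real but fixable, precisely because (ii) is assumed for \emph{every} $t$, so you have density to spare.

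The genuine gap is the necessity of (ii), which you dispose of with ``one can build $f\in\mathrm{PW}_s$ of large norm whose mass avoids the support of $\mu$.'' Two things are wrong with that picture. First, the failure of $\delta$-massiveness in the gaps between the selected intervals does not mean $\mu$ is absent there; it only bounds the mass of subintervals, so your test function still meets the measure in the gaps and that contribution has to be estimated (it can be, since each gap then carries mass $<\delta$, but only after a careful interplay of $\delta$, $L$, and the length of the bad interval $I$, over which you have no upper control). Second, and more seriously, on the few massive intervals themselves $\mu$ may be enormous, so $f$ must be made genuinely small on a union of intervals whose count per unit length is below the Nyquist rate for $\mathrm{PW}_s$ --- and producing an $L^2$-normalized function of exponential type $s$ that is small on such a set while retaining unit norm is exactly the hard direction of the Beurling--Landau necessary-density theory. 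It is normally handled by a weak-limit compactness argument (translate $\mu$ to the bad windows, pass to a weak-star limit, and show the limiting measure cannot norm $\mathrm{PW}_s$ because its massive cells are too sparse) rather than by an explicit construction. As written, this step --- which you yourself identify as the technical core --- is asserted, not proved, so the argument is incomplete precisely where the theorem has content.
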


As the most basic example, any measure of the form $\rho(x)m(x)$, where $0<c<\rho(x)<C<\infty$ is PW-sampling. For further examples see \cite{etudes}.

PW-systems and PW-sampling measures are related via the following statement. For $\mu\in$PW we denote by ${\rm PW}_t(\mu)$ the
Paley-Wiener spaces  $PW_t $ endowed with the equivalent norm from $L^2(\mu)$.

\begin{proposition}
[\cite{etudes}]Suppose $\det(\HH)=1$ a.e., and let $\mu$ be the (unique) spectral measure of the corresponding CS. Then
	$$\mu\in{\rm(PW)}\qquad \Leftrightarrow\qquad \HH\in{\rm(PW)}.$$
	Moreover, if either holds, then
	$$\forall t,\quad \BB_t(\HH)\doteq {\rm PW}_t(\mu).$$\end{proposition}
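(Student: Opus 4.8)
The whole proposition rests on one structural fact furnished by the spectral measure: the isometric inclusion $\BB_t\stackrel{\rm iso}{\subset}L^2(\mu)$ says that on $\BB_t$ the de Branges norm coincides with the $L^2(\mu)$-norm, i.e. $\|G\|_{L^2(\mu)}^2=\int|G|^2\,dx/|E_t|^2$ for every $G\in\BB_t$. I would pair this with the type computation available because $\det\HH=1$: by the standard exponential-type formula for canonical systems the Hermite--Biehler function $E_t$ has type $\int_0^t\sqrt{\det\HH}=t$, so every element of $\BB_t$ is entire of exponential type at most $t$, exactly as for $\PW_t$. (The hypothesis also gives $\mathrm{trace}\,\HH\ge2$, confirming the limit point case in which $\mu$ is unique.) Thus $\BB_t$ and $\PW_t$ both live inside the entire functions of type $\le t$, the first normed by $L^2(\mu)$ and the second by $L^2(m)$, and the proposition reduces to identifying these two sets and comparing the two norms.

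The implication $\HH\in\PW\Rightarrow\mu\in\PW$ is then immediate. By definition $\BB_t\doteq\PW_{s(t)}$ as sets for some $s(t)\to\infty$, and on this common set the de Branges norm, which is $\|\cdot\|_{L^2(\mu)}$, is equivalent to the Paley--Wiener norm $\|\cdot\|_{L^2(m)}$; this two-sided bound is precisely the assertion that $\mu$ samples $\PW_{s(t)}$. Since sampling of a space passes to all of its subspaces and $s(t)\to\infty$, every $\PW_s$ is sampled and $\mu\in\PW$. That in fact $s(t)=t$ (the ``moreover'') follows by comparing types: $\PW_s$ contains functions of type exactly $s$ whereas $\BB_t$ sits in $\{\,\text{type}\le t\,\}$, forcing $s(t)\le t$, while $\BB_t$ contains a generic reproducing kernel $K_t(\lambda,\cdot)$ of type exactly $t$, forcing $s(t)\ge t$.

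For the converse, assuming $\mu\in\PW$, I would prove the set equality $\BB_t=\PW_t$; the norm statement then follows, since on $\BB_t=\PW_t$ the de Branges norm is $\|\cdot\|_{L^2(\mu)}$, equivalent by sampling to $\|\cdot\|_{L^2(m)}=\|\cdot\|_{\PW_t}$. The inclusion $\PW_t\subseteq\BB_t$ is the softer half. For $G\in\PW_t$ the quotients $G/E_t$ and $G^\#/E_t$ are of bounded type in $\C_+$ (Krein's theorem, since $G$ is of exponential type and lies in $L^2(m)$) and of nonpositive mean type (the type of $G$ does not exceed that of $E_t$); by the de Branges membership criterion it then remains to verify $\int|G|^2\,dx/|E_t|^2<\infty$. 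This integrability, against the de Branges weight $|E_t|^{-2}$, is what has to be extracted from the hypothesis on $\mu$.

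The main obstacle is the reverse inclusion $\BB_t\subseteq\PW_t$ together with this integrability bookkeeping, because the fixed spectral measure $\mu$ and the $t$-dependent de Branges weight $|E_t|^{-2}$ coincide only after one already knows membership in $\BB_t$. Concretely, a function $G\in\BB_t$ is a priori only entire of type $\le t$ with $\int|G|^2\,dx/|E_t|^2<\infty$, and to place it in $\PW_t$ one must deduce $\int|G|^2\,dm<\infty$; the lower sampling inequality $\|G\|_{L^2(m)}\le C\|G\|_{L^2(\mu)}$ cannot be quoted directly, as it presupposes $G\in\PW_t$. I would break this circularity by invoking the geometric description of Theorem~\ref{PWmu}: the local mass bound (i), $\mu(x,x+1)\le\const$, together with a Plancherel--P\'olya estimate, controls $\int|G|^2\,dm$ by a discrete sum of point evaluations over a separated net, while the $\delta$-capacity lower bound (ii) forces that sum to be dominated by $\int|G|^2\,d\mu$; passing through the relation $\mu=w^*\!\lim_{s\to\infty}dx/|E_s|^2$ lets one transfer these uniform estimates between $\mu$ and the weights $|E_t|^{-2}$. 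It is this upgrade of the sampling equivalence from $\PW_t$ to the full class of exponential-type-$\le t$ functions in $L^2(\mu)$ that constitutes the real work of the proof.
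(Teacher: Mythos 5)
The paper does not prove this Proposition: it is imported verbatim from \cite{etudes}, so there is no in-paper argument to compare yours against. Judged on its own, the first half of your outline is sound. Set equality $\BB_t(\HH)\doteq\PW_{s(t)}$ between two reproducing-kernel Hilbert spaces forces norm equivalence by the closed graph theorem; the de Branges norm on $\BB_t$ coincides with the $L^2(\mu)$ norm by the definition of a spectral measure; sampling is inherited by the subspaces $\PW_{s'}\subset\PW_{s(t)}$ with $s'\le s(t)\to\infty$; and the identification $s(t)=t$ via the Krein--de Branges type formula $\mathrm{type}(E_t)=\int_0^t\sqrt{\det\HH}=t$ (elements of $\BB_t$ have type $\le t$, the kernels at non-real points have type exactly $t$) is the right mechanism for the ``moreover''.

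The genuine gap is the converse, which you have correctly located but not closed, and the repair you propose does not obviously work. Plancherel--P\'olya bounds a discrete sum of samples by $\|G\|_{L^2(m)}$; the direction you need --- bounding $\|G\|_{L^2(m)}$ from above by data controlled by $\mu$ for a function $G\in\BB_t$ not yet known to lie in $L^2(m)$ --- is itself a sampling inequality, so the circularity you identified is not actually broken. Likewise, the weak-$*$ convergence $dx/|E_s|^2\to\mu$ as $s\to\infty$ gives no uniform two-sided comparison between $\mu$ and the fixed weight $|E_t|^{-2}$ for finite $t$. The standard way to close this (and, to my knowledge, the route taken in \cite{etudes}) is structural rather than estimate-theoretic: when $\mu\in{\rm(PW)}$, the space $\PW_s$ endowed with the $L^2(\mu)$ norm is itself an axiomatic de Branges space isometrically contained in $L^2(\mu)$ (point evaluations are bounded by the sampling inequality, $|F^\#|=|F|$ on $\R$ gives the reflection axiom, and division by Blaschke factors preserves $\PW_s$). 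De Branges' ordering theorem then makes $\PW_s(\mu)$ and every $\BB_t$ of the chain of $\mu$ comparable inside $L^2(\mu)$, and the exponential-type count you already set up pins down $\BB_t=\PW_t(\mu)$. Without this (or an equivalent) ingredient, the inclusion $\BB_t\subseteq\PW_t$ --- the actual content of the theorem --- remains unproved.
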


\ms\subsection{ Det-normalization} We have the following \cite{etudes}:

\begin{theorem}\label{t0001} If $\HH$ is of PW-type, then
	\begin{equation}
		\det \HH\ne 0\quad{\rm a.e.},\qquad \int_0^\infty\sqrt{\det\HH(t)}dt=\infty. \label{eq005A}
	\end{equation}
\end{theorem}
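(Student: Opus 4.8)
The plan is to read off both conclusions from the single identity that, for a canonical system, the exponential type of the Hermite--Biehler function $E(t,\cdot)=A_t-iC_t$ is
$$\mathrm{type}(E_t)=\int_0^t\sqrt{\det\HH(\tau)}\,d\tau,$$
the Krein--de Branges type formula (see \cite{Rem}). The divergence of the integral is then essentially immediate, while the a.e. nondegeneracy of $\det\HH$ requires extracting more from the hypothesis $\BB_t\doteq\PW_{s(t)}$ than the type alone.

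First I would pin down $s(t)$. Since $\PW_s$ is exactly the set of entire functions of exponential type at most $s$ lying in $L^2(\R)$, its generating function $e^{-isz}$ has type $s$, and the exponential type of a de Branges space (the supremum of the types of its elements, attained by the reproducing kernels) is an invariant of the space as a set. Hence $\BB_t\doteq\PW_{s(t)}$ forces $\mathrm{type}(E_t)=s(t)$, so that $s(t)=\int_0^t\sqrt{\det\HH}$. Because the definition of PW-type supplies $s(t)\to\infty$, letting $t\to\infty$ gives $\int_0^\infty\sqrt{\det\HH}=\infty$, the second assertion. At the same time the strict inclusions $\BB_a\subsetneq\BB_b$ $(a<b)$ in the chain translate, via $\BB_a\doteq\PW_{s(a)}$ and $\PW_{s(a)}\subsetneq\PW_{s(b)}\Leftrightarrow s(a)<s(b)$, into strict monotonicity of $s$; thus $\int_a^b\sqrt{\det\HH}>0$ for every $a<b$, so $\det\HH$ cannot vanish a.e. on any interval. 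In particular $\HH$ has no indivisible (rank-one, constant-direction) intervals.

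It remains to upgrade ``not a.e.-zero on any interval'' to ``nonzero a.e.'', i.e. to exclude a rank-one set $S=\{\det\HH=0\}$ of positive measure and empty interior. Here I would use that $\BB_t\doteq\PW_{s(t)}$ controls not only the type but the modulus of $E_t$ on $\R$: equality of the spaces as sets forces $|E_t|^2$ to be an admissible weight producing the Lebesgue $L^2$-space on type-$s(t)$ functions, hence $|E_t(x)|$ bounded above and below uniformly in $x$. Writing $N_t(z)=(A_t(z),C_t(z))^{T}$ for the first column of the transfer matrix, so that $|E_t(x)|^2=|N_t(x)|^2$ for real $x$, and differentiating along $\dot N_t=-z\,\Omega\HH N_t$ gives
$$\frac{\partial}{\partial t}\,|E_t(x)|^{2}=-2x\,N_t(x)^{T}\Omega\HH(t)N_t(x),$$
an integrand that vanishes identically in $x$ precisely when $\HH(t)\propto I$. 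On a rank-one piece $\HH=h\,vv^{T}$ it equals $-2x\,h\,(v^{T}N_t)(N_t^{T}\Omega v)$, and I would show that accumulating such contributions over $S$ produces growth of $|E_t(x)|^2$ of order $|x|$ that the full-rank part of the system, whose direction vector $N_\tau(x)$ rotates in $\tau$ and hence averages out, cannot compensate; this contradicts the uniform two-sided bound on $|E_t|$ and forces $|S|=0$. Equivalently, one phrases the obstruction spectrally through the diagonal kernel identity $\pi K_t(x,x)=\int_0^t N_\tau(x)^{T}\HH(\tau)N_\tau(x)\,d\tau$: the hypothesis demands $K_t(x,x)\asymp s(t)$ uniformly in $x$, whereas a rank-one set injects mass $\int_S h\,|v^{T}N_\tau(x)|^2$ that oscillates in $x$ yet contributes nothing to the type $s(t)=\int_0^t\sqrt{\det\HH}$, destroying the uniformity.

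The main obstacle is exactly this last step: separating ``mass'' from ``type''. The type formula disposes of full intervals and of the divergence of $\int\sqrt{\det\HH}$ with no effort, but ruling out a nowhere-dense rank-one set of positive measure requires a genuinely quantitative comparison, showing that the rank-one contribution to $K_t(x,x)$ (or to $|E_t|^2$) cannot be masked by the regularly distributed full-rank contribution. I expect this to hinge on the uniform-in-$x$ regularity of $|E_t|$ and of the phase $\varphi_t'(x)=\pi K_t(x,x)/|E_t(x)|^2\asymp s(t)$ forced by $\BB_t\doteq\PW_{s(t)}$, localized to small $t$-windows where the full-rank mass is small, so that any positive rank-one mass becomes visible as a deviation from the Paley--Wiener profile.
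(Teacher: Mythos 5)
This theorem is quoted in the paper from \cite{etudes}; no proof appears in the present text, so your argument can only be assessed on its own terms. Your derivation of the second assertion is essentially sound: the Krein--de Branges type formula plus the fact that the exponential type of a de Branges space is determined by the space as a set identifies $s(t)$ with $\int_0^t\sqrt{\det\HH}$, and $s(t)\to\infty$ then gives $\int_0^\infty\sqrt{\det\HH}=\infty$. (One caveat: the strict inclusions $\BB_a\subsetneq\BB_b$ you invoke hold only away from indivisible intervals, so there is a mild circularity in deducing the absence of indivisible intervals from strict monotonicity of $s$; the clean route is to exclude indivisible intervals first, e.g.\ by noting that two Paley--Wiener spaces never differ by a finite-dimensional subspace, whereas the chain jumps by one dimension at the right endpoint of an indivisible interval.)

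The genuine gap is the first assertion, $\det\HH\ne0$ a.e. Your type argument only rules out intervals on which $\det\HH$ vanishes a.e.; it is silent about a nowhere dense set $S$ of positive measure with $\det\HH=0$ on $S$, since $s(t)=\int_0^t\sqrt{\det\HH}$ is still strictly increasing in that case and no contradiction with the exponential type can arise. You acknowledge this, but what you offer for that case is a plan, not a proof: it is phrased throughout as ``I would show'' and ``I expect'', and its anchor --- that $\BB_t\doteq\PW_{s(t)}$ forces $|E_t(x)|$ to be bounded above and below uniformly in $x$ --- is unjustified and false in general. Equality as sets gives (via the closed graph theorem) equivalence of norms, i.e.\ that $|E_t|^{-2}dx$ is simultaneously a sampling and a Bessel measure for $\PW_{s(t)}$; this constrains $|E_t|^{-2}$ only on average over unit intervals, not pointwise. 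Without that two-sided pointwise bound, the proposed comparison of the ``rank-one mass'' injected into $\pi K_t(x,x)=\int_0^t N_\tau(x)^{T}\HH(\tau)N_\tau(x)\,d\tau$ against the type $s(t)$ has no quantitative foothold, and the claim that the full-rank part ``averages out'' while the rank-one part ``oscillates visibly'' is heuristic. So the hardest and most substantive part of the theorem --- excluding a positive-measure degenerate set for $\det\HH$ --- remains unproved in your attempt.
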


	The change of time $t\mapsto s=s(t)$ in the above theorem allows us to transform any PW-type system, or more generally any canonical system satisfying 
\eqref{eq005A}
	 into a canonical system with $$\det \HH=1\quad{\rm a.e.}$$
	We will call such systems {\it det-normalized}. A regular (locally summable) Hamiltonian will remain regular under det-normalization.
	
	Throughout the rest of the paper, all systems considered are assumed to be det-normalized.

	\ms\subsection{Inverse problem: recovery of $h_{11}$ ($h^\mu$)}
	For PW-systems the leading term of the Hamiltonian can be recovered from the reproducing kernels $k_t\in {\rm PW}_t(\mu)$:

	\begin{theorem}[\cite{etudes}]\label{t3} Let $\mu\in\PW$ be the spectral measure of a system \eqref{eq001A} with the Hamiltonian $\HH$. Then $t\mapsto k_t(0)$ is an absolutely continuous
		function and 
		\begin{equation}h_{11}(t)=h^\mu(t):=\pi\frac d{dt} k_t(0).\label{eq003A}\end{equation}
	\end{theorem}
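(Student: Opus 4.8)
The plan is to reduce the statement to the explicit identity $\pi k_t(0)=\int_0^t h_{11}(s)\,ds$, from which both the absolute continuity and the differentiation formula are immediate. First I would record the behaviour of the transfer matrix at the spectral point $z=0$. At $z=0$ the system \eqref{eq001A} for the matrizant, $\Omega\dot M=z\HH M$, degenerates to $\dot M=0$, so $M(t,0)=M(0,0)=I$ for every $t$; in the notation \eqref{eqTM} this means $A_t(0)=1$ and $C_t(0)=0$ (and $B_t(0)=0$, $D_t(0)=1$). Substituting the reproducing point $\l=0$ into the reproducing-kernel formula for $\BB(E_t)$ and using $C_t(0)=0$, $A_t(0)=1$ collapses the kernel to $K_t(0,z)=\tfrac1\pi\,C_t(z)/z$. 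Evaluating on the diagonal at $0$ and using $C_t(0)=0$ once more gives $\pi k_t(0)=\partial_z C_t(z)|_{z=0}=C_t'(0)$, where the prime denotes the derivative in $z$.

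Next I would compute $C_t'(0)$ directly from the system. Writing the symmetric Hamiltonian as $\HH=\begin{pmatrix}h_{11}&h_{12}\\ h_{12}&h_{22}\end{pmatrix}$ and reading off the $(2,1)$-entry of $\dot M=z\,\Omega^{-1}\HH M$ yields $\partial_t C_t(z)=z\bigl(h_{11}(t)A_t(z)+h_{12}(t)C_t(z)\bigr)$. Since $C_0\equiv0$, integrating in $t$ gives $C_t(z)=z\,\Phi_t(z)$ with $\Phi_t(z)=\int_0^t\bigl(h_{11}(s)A_s(z)+h_{12}(s)C_s(z)\bigr)\,ds$ holomorphic in $z$. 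Hence $C_t'(0)=\Phi_t(0)$, and evaluating the integrand at $z=0$ via $A_s(0)=1$, $C_s(0)=0$ gives $\Phi_t(0)=\int_0^t h_{11}(s)\,ds$. Combining with the previous step produces
\[
\pi k_t(0)=\int_0^t h_{11}(s)\,ds.
\]
Because $h_{11}\in L^1_{\mathrm{loc}}[0,\infty)$, the right-hand side is an absolutely continuous function of $t$ whose derivative equals $h_{11}(t)$ for almost every $t$, which is precisely the assertion $h^\mu(t)=\pi\frac{d}{dt}k_t(0)=h_{11}(t)$.

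The step requiring the most care is the analytic justification of the factorization $C_t(z)=z\Phi_t(z)$ together with the evaluation $\Phi_t(0)=\int_0^t\bigl(h_{11}A_s+h_{12}C_s\bigr)\big|_{z=0}\,ds$. For each fixed $z$ the solution $M(\cdot,z)$ is absolutely continuous since $\HH\in L^1_{\mathrm{loc}}$, and for each fixed $t$ it is entire in $z$, the Picard iteration converging locally uniformly in $z$; this gives the holomorphy of $\Phi_t$ and, via local boundedness of $A_s(z),C_s(z)$ uniformly for $s\in[0,t]$, the legitimacy of passing the value at $z=0$ inside the time-integral. Here the det-normalization and the hypothesis $\mu\in\PW$ are used implicitly: they guarantee that the system is regular with no jump intervals, so that every $t$ is $\HH$-regular, $k_t$ is genuinely the reproducing kernel of $\BB_t\doteq\PW_t(\mu)$ at the point $0$, and the kernel formula applies uniformly in $t$. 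Everything beyond this regularity input is the elementary algebra carried out above.
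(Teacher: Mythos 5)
Your argument is correct. The chain $M(t,0)=I$, hence $k_t(z)=K_t(0,z)=C_t(z)/(\pi z)$, hence $\pi k_t(0)=C_t'(0)=\int_0^t h_{11}(s)\,ds$ via the $(2,1)$-entry of the system, is a valid derivation, and the analytic points you flag (local uniform convergence of the Picard iteration in $z$, $\HH\in L^1_{\rm loc}$ giving absolute continuity in $t$, and the isometric identification of the $L^2(\mu)$-reproducing kernel of $\PW_t(\mu)$ with the de Branges kernel of $\BB_t$, which is where $\mu\in\PW$ and det-normalization enter) are exactly the ones that need flagging.

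Note, however, that this paper does not actually prove Theorem \ref{t3}: it is imported from the earlier paper \cite{etudes} and stated without proof, so there is no in-text argument to compare yours against. What can be said is that your route is fully consistent with how the present paper uses the result — Section 5 invokes precisely the identities $k_t(z)=C(z)A(0)/(\pi z)$ and $h_{11}=\pi\dot k_t(0)$ when verifying the Bessel Hamiltonian — and your identity $\pi k_t(0)=\int_0^t h_{11}(s)\,ds$ is the standard de Branges-theoretic derivative formula for the diagonal of the reproducing kernel. The machinery the authors develop in \cite{etudes} around this theorem (the truncated Toeplitz equation for $\psi_t=\hat k_t$ in Theorem \ref{tTT}) is aimed at computing $k_t(0)$ from $\mu$ alone, i.e., at the inverse direction; your proof establishes the direct identity from the system, which is the cleaner and more elementary way to justify the formula itself.
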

	
	Systems with even spectral measures have diagonal Hamiltonians. If, in addition, the system is det-normalized, then 
	$h_{22}=1/h_{11}$ and the ISP is solved with the recovery of $h_{11}$. For non-diagonal cases further analysis must be conducted, see below.

\ms\subsection{Generalized Hilbert transform}\label{secHT} For a $\Pi$-finite measure $\mu$ and $f\in L^2(|\mu|)$ we will use the notation
$$ K(f\mu)(z)=\frac1{\pi }\int\frac{f(s)~d\mu(s)}{s-z},$$
and
$$\KK\mu(z)=\frac1{\pi }\int\left[\frac1{s-z}-\frac s{1+s^2}\right]~d\mu(s),$$
where $z\in\C\setminus\R$. If $f\in L^2(\mu)$ is an entire function, then we define
$$H^\mu f=K(f\mu)-f\KK\mu.$$
It is clear that $H^\mu f$ extends to an entire function:
$$
(H^\mu f)(z)=\frac1\pi\int\left[\frac{f(s)-f(z)}{s-z}+\frac{s f(z)}{1+s^2}\right]~d\mu(s).$$

As was shown in \cite{etudes}, $H^\mu$ plays an important role in the recovery of the off-diagonal elements of the Hamiltonian:

\ms	\subsection{Inverse problem: recovery of $h_{12}=h_{21}$ ($g^\mu$)}
	
	For the off-diagonal terms we use

\begin{theorem}[\cite{etudes}] \label{t007} Let $\mu\in\PW$. Consider the reproducing kernels at 0, $k_t\in\PW_t(\mu)$. Define $\tilde l_t=H^\mu k_t$. Then $\mu$ is the spectral measure of the Hamiltonian
	$$\HH=\begin{pmatrix}h^\mu&g^\mu\\g^\mu&\frac{1+g^2_{\mu}}{h^\mu}\end{pmatrix},$$
	where
	$$g^\mu(t):=\pi\frac d{dt} \tilde l_t(0).$$
\end{theorem}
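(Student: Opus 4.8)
The plan is to reduce the statement to the transfer matrix of an already-existing system together with a single identity for the generalized Hilbert transform. Since $\mu\in\PW$ is $\Pi$-finite, the Krein--de Branges theory provides a regular canonical system having $\mu$ as its spectral measure, and by Theorem \ref{t0001} we may take this system det-normalized, with real entire transfer-matrix entries $A_t,C_t$. Evaluating the reproducing-kernel formula at the point $0$ and using $M(t,0)=I$ (so that $A_t(0)=1$ and $C_t(0)=0$) gives the clean expression $k_t(z)=C_t(z)/(\pi z)$. Theorem \ref{t3} already yields the diagonal entry $h_{11}=h^\mu$, and det-normalization forces $h_{22}=(1+h_{12}^2)/h_{11}$; thus the whole assertion reduces to identifying the off-diagonal entry of this system, i.e.\ to proving that $h_{12}(t)=\pi\frac{d}{dt}\ti l_t(0)$ with $\ti l_t=H^\mu k_t$.

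The heart of the matter is the conjugation identity
\[
H^\mu k_t=\ti l_t=\frac{1-A_t}{\pi z}.
\]
Granting it, the computation is short. As $A_t(0)=1$, the right-hand side is entire and $\ti l_t(0)=-\frac1\pi\,\partial_z A_t(z)\big|_{z=0}$, whence $g^\mu(t)=\pi\frac{d}{dt}\ti l_t(0)=-\partial_z\partial_t A_t(z)\big|_{z=0}$. On the other hand, the $(2,1)$ entry of the canonical system $\Omega\dot M=z\HH M$ reads $-\partial_t A_t=z\,(h_{12}A_t+h_{22}C_t)$; differentiating in $z$, setting $z=0$, and inserting $A_t(0)=1$, $C_t(0)=0$ gives $-\partial_z\partial_t A_t(z)\big|_{z=0}=h_{12}(t)$. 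Hence $g^\mu=h_{12}$, as needed. (Carrying out the same computation on the $(1,1)$ entry, $\partial_t C_t=z(h_{11}A_t+h_{12}C_t)$, reproduces Theorem \ref{t3} and is a useful consistency check; in the free case $A_t=\cos tz$, $C_t=\sin tz$ the identity is the familiar passage $\frac{\sin tz}{\pi z}\mapsto\frac{1-\cos tz}{\pi z}$, with vanishing $t$-derivative at $0$, matching $h_{12}=0$.)

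Proving the conjugation identity is the step I expect to be the main obstacle. Both $H^\mu k_t$ and $\ti l_t$ are entire of finite exponential type (that of the space $\BB_t$), so the difference $\Phi:=H^\mu k_t-\ti l_t$ is entire of the same type and the goal is to show $\Phi\equiv0$. The natural route is to compute the Cauchy transform directly: for $z\in\C_+$ one writes $H^\mu k_t(z)=K(k_t\mu)(z)-k_t(z)\,\KK\mu(z)$ and evaluates $K(k_t\mu)$ through the Hermite--Biehler structure of $E_t=A_t-iC_t$. The decisive input is that $\mu$ is the \emph{spectral} measure, so that the inclusion $\BB_t\hookrightarrow L^2(\mu)$ is isometric; this is what links the Cauchy transform of $k_t\mu$ to $1/E_t$ and, after the $\KK\mu$-regularization, identifies $H^\mu k_t$ with the conjugate kernel built from $A_t$ rather than from $C_t$. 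One then closes the argument by a Phragmén--Lindelöf estimate placing $\Phi$ in $\PW_t(\mu)$, together with the orthogonality $\Phi\perp\BB_t$ in $L^2(\mu)$ that is forced by the construction, so that $\Phi\equiv0$. The genuinely delicate point is the interplay between the two measures attached to the chain --- the spectral measure $\mu$, from which $H^\mu$ is built, and the local weights $|E_t|^{-2}$, with respect to which the Hermite--Biehler conjugation $C_t\leftrightarrow A_t$ is intrinsic; reconciling these two is exactly the content established in \cite{etudes}, and it is there that I would concentrate the effort.
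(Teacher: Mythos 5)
First, a point of reference: the paper does not prove Theorem \ref{t007} at all --- it is quoted verbatim from \cite{etudes} in the preliminaries --- so your proposal can only be measured against the argument in that reference. Your reductions are correct and form the right skeleton: $k_t(z)=C_t(z)/(\pi z)$ from $M(t,0)=I$; $h_{11}=h^\mu$ from Theorem \ref{t3}; $h_{22}=(1+h_{12}^2)/h_{11}$ from det-normalization; and the computation showing that the identity $H^\mu k_t=(1-A_t)/(\pi z)$ yields $\pi\frac{d}{dt}\ti l_t(0)=-\partial_z\partial_t A_t|_{z=0}=h_{12}$ is sound, as is the free-case sanity check.

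The genuine gap is that the conjugation identity --- which you yourself call the heart of the matter --- is never established; what you offer in its place is not an argument. Two concrete problems. (i) The orthogonality $\Phi\perp\BB_t$ in $L^2(\mu)$ is not ``forced by the construction'': $H^\mu k_t$ is defined through the Cauchy transform $K(k_t\mu)$ and the regularization $\KK\mu$, and nothing in that definition produces an inner-product identity against $\BB_t$. The mechanism that actually makes the identity work is the Herglotz/Weyl-function representation: the isometric embedding $\BB_t\subset L^2(\mu)$ forces the Cauchy transform $\KK\mu$ to agree, up to an additive real constant, with the Weyl function of the chain, and it is this representation that converts $K(k_t\mu)-k_t\,\KK\mu$ into $(1-A_t)/(\pi z)$. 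A Phragm\'en--Lindel\"of bound plus an unproven orthogonality does not substitute for it. (ii) That additive real constant is not innocent. As the paper states at the end of Section 4, $\mu$ determines the det-normalized Hamiltonian only up to a one-parameter family, whose members differ precisely by $A_t\mapsto A_t+bC_t$ (with $k_t=C_t/(\pi z)$ unchanged). Since $H^\mu$ is built from $\mu$ alone, the identity $H^\mu k_t=(1-A_t)/(\pi z)$ can hold for only one member of that family --- the one whose Weyl function is normalized compatibly with the specific choice of $\KK\mu$. Your opening move, ``take a canonical system with spectral measure $\mu$,'' therefore does not pin down the object for which your identity is true; you must either construct that particular member or show the constant can be absorbed. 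Until (i) and (ii) are supplied, the proposal is a correct reduction of the theorem to its essential content, not a proof of it.
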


\ms\subsection{Equations for the Fourier transform of $k_t$}\label{secFt} How to compute the functions $h^\mu$ and $g^\mu$
from Theorem \ref{t007}? Sometimes  it is helpful to work with the functions
$$\psi_t:=\hat k_t,$$
so that
$$\frac1{\sqrt{2\pi}}\int_\R\psi_t=k_t(0).$$


If $f\in {\rm PW}_t$, then
$$f(0)=\frac1{\sqrt{2\pi}}\int_{-t}^t\hat f(\xi)~d\xi$$
while  also
$$ f(0)=(f, \mathring k_t)_{PW_t}=\left(\hat f,\FF(\mathring k_t)\right)_{L^2(-t,t)}.$$
This leads us to the well known formula for the Fourier transform of the sinc function:
$$\FF \mathring k_t =\FF\left(\frac{\sin tz}{\pi z}\right)
=  \frac1{\sqrt{2\pi}}~1_{(-t,t)}.$$

The Fourier transform $\psi_t$ of the reprokernel $k_t$ can be found using the following statement.

\begin{theorem}[\cite{etudes}] \label{tTT} $\psi=\psi_t$ satisfies
	\begin{equation}\psi\ast \hat\mu=1 \quad  {\rm on}\quad (-t,t)\label{eq002A}\end{equation}
	and
	$$\psi=0 \quad  {\rm on}\quad \R\setminus [-t,t].$$\end{theorem}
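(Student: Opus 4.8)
The plan is to read both assertions through the Fourier transform, taking the reproducing property of $k_t$ in $\PW_t(\mu)$ as the only substantive input, and to treat all transforms as tempered distributions since $\mu$ need not be finite. The support statement is the easy half: by definition $k_t\in\PW_t=\FF L^2[-t,t]$, so $k_t$ is an $L^2$ entire function of exponential type at most $t$ and the Paley--Wiener theorem forces $\psi_t=\hat k_t$ to vanish off $[-t,t]$. Concretely, writing $k_t=\FF g$ with $g\in L^2[-t,t]$, one has $\psi_t=\FF\FF g=g(-\cdot)$, supported in $[-t,t]$.

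For the convolution equation I would first record that $\nu:=k_t\mu$ is a tempered distribution. Indeed $k_t$ is bounded on $\R$ (it is the Fourier transform of the $L^1[-t,t]$ function $g$), and $\mu$ is $\Pi$-finite, so $\int(1+x^2)^{-1}\,d|\nu|\le\|k_t\|_\infty\int(1+x^2)^{-1}\,d\mu<\infty$; hence $\FF\nu$ is well defined. The heart of the argument is to identify $\FF\nu$ on $(-t,t)$ by testing against $\phi\in C_c^\infty(-t,t)$, and the key device is to feed the reproducing identity the cleverly chosen element $F:=\FF\phi$ of $\PW_t$. Since $\phi$ is supported in $[-t,t]$, the function $F$ is entire of exponential type $t$ and lies in $\PW_t$, with $F(0)=\tfrac1{\sqrt{2\pi}}\int\phi$. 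Using the definition of the distributional transform and then the reproducing property (recall that $k_t$, being a ratio of real entire functions, is real on $\R$, so $\overline{k_t}=k_t$ there),
\begin{equation*}
\langle\FF\nu,\phi\rangle=\langle\nu,\FF\phi\rangle=\int F\,k_t\,d\mu=(F,k_t)_{L^2(\mu)}=F(0)=\frac1{\sqrt{2\pi}}\int\phi .
\end{equation*}
As $\phi$ is arbitrary, this gives $\FF\nu=\tfrac1{\sqrt{2\pi}}$ on $(-t,t)$.

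It then remains to turn $\FF\nu=\FF(k_t\mu)$ into $\psi_t\ast\hat\mu$. This is the distributional convolution theorem $\FF(k_t\mu)=\tfrac1{\sqrt{2\pi}}\,\hat k_t\ast\hat\mu=\tfrac1{\sqrt{2\pi}}\,\psi_t\ast\hat\mu$, which is legitimate precisely because one factor, $\psi_t=\hat k_t$, is compactly supported, so the convolution of the compactly supported distribution $\psi_t$ with the tempered distribution $\hat\mu$ makes sense. Combining with the previous step yields $\psi_t\ast\hat\mu=1$ on $(-t,t)$, as claimed.

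The main obstacle is entirely the distributional bookkeeping. Because $\mu$ is only $\Pi$-finite, both $\hat\mu$ and $\FF\nu$ are genuine distributions rather than functions, and one cannot naively evaluate $\int e^{-i\xi x}k_t\,d\mu$ pointwise (typically $k_t\notin L^1(\mu)$, e.g.\ when $\mu\asymp m$ and $k_t$ decays like a sinc, so a direct Fubini swap fails). The clean way around this is exactly the two points above: verifying that $k_t\mu$ is tempered, which uses boundedness of $k_t$ together with $\Pi$-finiteness of $\mu$, and invoking the convolution theorem only in the form permitted by the compact support of $\psi_t$. Everything else is a direct consequence of the reproducing identity for $k_t$.
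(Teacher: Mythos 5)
Your argument is correct and complete: the support claim is immediate from $k_t\in\PW_t=\FF L^2[-t,t]$, and the convolution identity follows, exactly as you show, by testing $\FF(k_t\mu)$ against $\phi\in C_c^\infty(-t,t)$ and feeding $\FF\phi\in\PW_t$ into the reproducing identity, with the distributional bookkeeping ($k_t$ bounded with bounded derivatives, $\mu$ Poisson-finite, $\psi_t$ compactly supported) handled properly. The paper itself states this theorem without proof, importing it from \cite{etudes}, but your derivation is precisely the one suggested by the surrounding discussion in Section 2.8 (Fourier-transforming the identity $F(0)=(F,k_t)_{L^2(\mu)}$), so it matches the intended approach.
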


(In general, all Fourier transforms and convolutions throughout the rest of the paper are understood in the sense of distributions.)

\section{Riemann-Hilbert}\label{RH}


\ms \subsection{Hilbert transform in ISP} 
We start with an example of an ISP for one of the simplest non-even spectral measures. 

First,
denote
$$\sigma (x)={\rm sign}(x),$$
where the Fourier transform is understood in the sense of distributions.
Then 
$$\hat\sigma(t)=\sqrt{\frac 2\pi}~\frac 1{it}.$$

We will use the notation $Hf$ for the standard Hilbert transform of a function $f$ on $\R$:
$$Hf(x)=\frac 1\pi~\text{p.v.}~\int_{\R}\frac {f(t)}{t-x}dt.$$
	In terms of convolutions,
	$$Hf=\frac 1\pi f\ast \frac 1t.$$
	
$$H\psi_t=\frac1\pi~\psi_t\ast\frac1t.$$

 Let now define a measure on $\R$ as
\begin{equation}\mu=c_1m+c_2\sigma.\label{eq005}\end{equation}
with $c_1>|c_2|$, $c_1,c_2\in \mathbb R$. Clearly, $\mu\in$(PW).  Also, from the well-known rescaling properties of the Hamiltonians
and spectral measure, it follows  that $h_{11}(t)$ is a constant function. First, in our calculations below we aim to find that constant.

Consider the  chain of (regular) de Branges speces $\BB_t$ corresponding to $\mu$ and the family of reproducing kernels  at zero, $k_t\in\BB_t$.
Let as before $\psi_t=\hat{k_t}$.
Since
$$\psi_t\ast\hat\mu=\sqrt{2\pi}\left[c_1\psi_t-ic_2H\psi_t\right],$$
from \eqref{eq002A} we obtain
$$\sqrt{2\pi}\left[c_1\psi_t-ic_2H\psi_t\right]=1\quad {\rm on}\quad(-t,t).$$

To solve this equation for $\psi_t$ we need to deviate into classical complex analysis

\ms\subsection{Plemelj theorem} We introduce the function
\begin{equation}C_t(z)=\frac1{2\pi i}\int_{-t}^t\frac{\psi_t(s)~ds}{s-z},
\quad z\in \hat{\mathbb C}\setminus [-t,t],\label{eq002}\end{equation}
and denote its boundary values by
$C^\pm_t(x)$, $-t\le x\le t$:
$$C^\pm_t(x)=\lim_{y\to\pm 0}C_t(x+iy).$$ 

\begin{theorem}
$$\psi_t=C^+_t-C^-_t,\qquad -i(H\psi_t)=C^+_t+C^-_t.$$
\end{theorem}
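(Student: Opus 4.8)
The plan is to recognize the two asserted identities as the classical Sokhotski--Plemelj jump relations for the Cauchy-type integral \eqref{eq002}, applied to $f=\psi_t$ supported on $[-t,t]$. Recall the general principle: for a reasonable function $f$ on a finite interval, the Cauchy transform $C(z)=\frac{1}{2\pi i}\int\frac{f(s)\,ds}{s-z}$ has boundary values $C^\pm$ from the upper and lower half-planes satisfying the jump relation $C^+-C^-=f$ and the mean relation $C^++C^-=\frac{1}{\pi i}\,\text{p.v.}\int\frac{f(s)\,ds}{s-x}$. With $f=\psi_t$ the first of these is \emph{verbatim} the first claimed identity, so no further work is needed there; the entire content lies in identifying the mean relation with $-i(H\psi_t)$.

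First I would derive the boundary values directly, by splitting the Cauchy kernel $\frac{1}{s-(x+iy)}=\frac{(s-x)+iy}{(s-x)^2+y^2}$ into its two parts: the imaginary part produces (a multiple of) the Poisson kernel of the half-plane, and the real part produces (a multiple of) the conjugate Poisson kernel. Letting $y\to 0^\pm$, the Poisson piece reproduces $\pm\tfrac12\psi_t(x)$ (this is the jump), while the conjugate-Poisson piece is even in $y$ and hence converges, from both sides, to the same principal-value integral. Carrying out this standard computation gives
$$C^\pm_t(x)=\pm\tfrac12\,\psi_t(x)+\tfrac i2\,\tilde\psi_t(x),$$
where $\tilde\psi_t(x)=\frac1\pi\,\text{p.v.}\int\frac{\psi_t(s)\,ds}{x-s}$ is the conjugate function. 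Adding the two lines yields $C^+_t+C^-_t=i\tilde\psi_t$, and the remaining point is purely a matter of matching conventions: the Hilbert transform of this paper carries the kernel $1/(s-x)$, i.e. $H\psi_t=-\tilde\psi_t$, so $C^+_t+C^-_t=i(-H\psi_t)=-i\,H\psi_t$, which is exactly the second identity. The sign bookkeeping here (the factor $1/i=-i$ combined with the reversed kernel) is what I would double-check most carefully, since it is the only place an error could slip in.

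The one genuine subtlety is that $\psi_t=\hat k_t$ is, a priori, merely an element of $L^2(-t,t)$ (hence of $L^1$ by compact support) rather than a H\"older-continuous function, so the pointwise Plemelj formulas do not apply literally and the limits $C^\pm_t$ must be read in the $L^2$ (or distributional) sense flagged in the excerpt. The clean way around this is to observe that, since $\psi_t\in L^2(-t,t)$, the restrictions of $C_t$ to $\C_+$ and $\C_-$ lie in the Hardy spaces $H^2(\C_+)$ and $H^2(\C_-)$ respectively; the two Plemelj relations are then nothing but the standard statements that the nontangential $L^2$ boundary values of these Cauchy integrals recover $\psi_t$ as their jump and $-i\,H\psi_t$ as their sum. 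I would therefore invoke the $L^2$ boundary-value theory of the Cauchy integral and the conjugate function to justify the passage to the limit, which settles the claim without any additional estimate. This is also the reason the statement is true with the Hilbert transform of an $L^2$ function, exactly the object produced by the equation $\sqrt{2\pi}\,[c_1\psi_t-ic_2H\psi_t]=1$ on $(-t,t)$ that motivates introducing $C_t$ in the first place.
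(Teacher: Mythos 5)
Your proposal is correct. The paper offers no proof of this statement at all --- it is recorded as the classical Sokhotski--Plemelj theorem --- so there is nothing to compare against; your derivation (splitting the Cauchy kernel into Poisson and conjugate--Poisson parts, taking boundary values in the $L^2$/Hardy-space sense since $\psi_t$ is merely in $L^2(-t,t)$) is the standard argument, and your sign bookkeeping is right: with the paper's convention $H f(x)=\frac1\pi\,\mathrm{p.v.}\int\frac{f(s)}{s-x}\,ds$ one gets $C^\pm_t=\pm\frac12\psi_t-\frac i2 H\psi_t$, whence $C^+_t-C^-_t=\psi_t$ and $C^+_t+C^-_t=-iH\psi_t$ as claimed.
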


\ms\no It follows that \eqref{eq002A} has the form
\begin{equation}
	C^+_t=GC^-_t+g,\label{eq001}
\end{equation}
where $G$ and $g$ are the numbers
$$G=\frac{c_1-c_2}{c_1+c_2},\qquad g=\frac1{\sqrt{2\pi }}\frac1{c_1+c_2}.$$
   
\ms\subsection{THe Riemann-Hilbert problem} 
The above equation \eqref{eq002A} is a particular case of the classical Riemann-Hilbert problem.
Given two function $F(s)$ and $f(s)$ on $[-t,t]$, we want to find an analytic function $\Phi=\Phi_t(z)$ in $\hat{\mathbb C}\setminus [-t,t]$
such that $\Phi(\infty)=0$ and
$$\Phi^+=F\Phi^-+f\quad {\rm on}\quad [-t,t].$$

The following statement can be verified by direct calculations:

\begin{theorem} Denote 
\begin{equation}X_t(z)=\exp\left\{\frac1{2\pi i}\int_{-t}^t\frac{\log F(s)}{s-z}~ds\right\}.\label{eq003}\end{equation}
Then
\begin{equation}\Phi_t(z)=X_t(z)\left[\frac1{2\pi i}\int_{-t}^t\frac{f(s)~ds}{(s-z) X_t^+(s)}        \right].\label{eq004}\end{equation}
\end{theorem}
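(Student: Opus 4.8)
The plan is to use the classical device of dividing out a solution of the associated homogeneous problem, which reduces the verification of \eqref{eq004} to the additive jump (Plemelj) problem already recorded above. The starting point is that $X_t$ from \eqref{eq003} is engineered to solve the homogeneous Riemann–Hilbert problem $X_t^+ = F X_t^-$ on $(-t,t)$. Indeed, writing $\chi(z) = \frac{1}{2\pi i}\int_{-t}^t \frac{\log F(s)}{s-z}\,ds$, the Plemelj jump formula (in the same $(+)-(-)$ convention used in the preceding subsection) gives $\chi^+ - \chi^- = \log F$ across $(-t,t)$, whence $X_t = e^{\chi}$ satisfies $X_t^+/X_t^- = e^{\log F} = F$. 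Two further facts follow at once from this representation and will be used below: $X_t$ is analytic and nonvanishing on $\hat{\mathbb C}\setminus[-t,t]$, being the exponential of a Cauchy integral, and $X_t(\infty) = e^{\chi(\infty)} = 1$ since the Cauchy integral decays at infinity.

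Next I would substitute the ansatz $\Phi = X_t\Psi$ into the inhomogeneous boundary relation $\Phi^+ = F\Phi^- + f$. Using $X_t^+ = F X_t^-$, the factor $F$ cancels and the relation collapses to the pure additive jump $\Psi^+ - \Psi^- = f/X_t^+$ on $(-t,t)$. This is precisely the scalar problem solved by a Cauchy integral: applying Plemelj once more, $\Psi(z) = \frac{1}{2\pi i}\int_{-t}^t \frac{f(s)\,ds}{(s-z)X_t^+(s)}$ has boundary values with exactly the prescribed difference, and is analytic off $[-t,t]$. Multiplying back by $X_t$ returns the asserted formula \eqref{eq004}, and the normalization $\Phi(\infty)=0$ is immediate since $\Psi(\infty)=0$ while $X_t(\infty)=1$. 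As the phrasing of the statement suggests, one may equally bypass the derivation and simply plug the proposed $\Phi_t$ into the boundary relation, computing $\Phi_t^{\pm}$ from the jump formulas for the two Cauchy integrals in \eqref{eq003} and \eqref{eq004}; both routes rest on the same two ingredients.

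The main point to attend to is not a genuine obstacle but the regularity and endpoint bookkeeping needed to justify the Plemelj formulas and the single-valuedness of $\log F$. For the jump relations in the form used above it suffices that $F$ and $f$ be H\"older continuous on $[-t,t]$ with $F$ nonvanishing; crucially, because the contour is a segment rather than a closed loop, a continuous branch of $\log F$ exists with no winding (index) obstruction, so no additional polynomial term is forced into $\Phi_t$ and the normalization at infinity is attained by the bare Cauchy integral. The only delicate behavior is at the endpoints $\pm t$, where $X_t$ may acquire fractional-power singularities; for the application in the following subsection this is moot, since there $F=G$ and $f=g$ are constants, $\log F$ is constant, and the integral defining $X_t$ evaluates in closed form, rendering every step elementary.
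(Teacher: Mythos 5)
Your argument is correct and is the standard derivation; the paper itself offers no proof beyond the remark that the formula ``can be verified by direct calculations,'' and your factorization $\Phi=X_t\Psi$ with $X_t^+=FX_t^-$ reducing to the additive jump $\Psi^+-\Psi^-=f/X_t^+$ is exactly the canonical way to carry out (or organize) that verification. Your closing remarks on H\"older regularity, the absence of an index obstruction on an open arc, and the endpoint behavior of $X_t$ are sensible housekeeping that the paper leaves implicit, and they are indeed vacuous in the constant-coefficient application that follows.
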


\ms\subsection{Computation of $\int_{-t}^t\psi_t$}  

We will use the theorem with $F(s)=G$ and $f(s)=g$ to solve \eqref{eq001}.
As $z\to\infty$, \eqref{eq002} implies
$$ C_t(z)~\sim -\frac1z ~\frac1{2\pi i}\int_{-t}^t\psi_t$$
and, from \eqref{eq004},
$$C_t(z)=\Phi_t(z)~\sim -\frac1z ~\frac1{2\pi i}\int_{-t}^t \frac{g}{X_t^+}(s).$$
 
By direct calculations,
$$\int_{-t}^t \frac{1}{s-z}ds=\log\left(\frac{z-t}{z+t}\right).$$
The boundary limits of the last function on $(-t,t)$ are
$$\left(\log\left(\frac{z-t}{z+t}\right)\right)_\pm=\pm\pi i + \log\left|\frac{z-t}{z+t}\right|.$$
From \eqref{eq003} we obtain
$$X^+_t(s)=\exp\left[D\left(\frac 12   +\frac1{2\pi i}\log\left|\frac{s-t}{s+t}\right|\right)\right],$$
where $D=\log G$.
Let us now compute
$$I=\int_{-t}^t \frac 1{X^+_t(s)}ds. $$

First, note that
\[
\frac{1}{X^+_t(s)} = e^{-D/2} \cdot \left( \frac{t-s}{t+s} \right)^{\frac{D i}{2\pi}}.
\]

To calculate the integral, let \( u = \frac{s}{t} \), so \( s = t u \), \( ds = t  du \), and when \( s = -t \), \( u = -1 \); when \( s = t \), \( u = 1 \). Then
\[
\frac{t-s}{t+s} = \frac{1 - u}{1 + u}
\]
and
\[
I = e^{-D/2} \int_{-1}^{1} \left( \frac{1 - u}{1 + u} \right)^{\frac{D i}{2\pi}} t  du = t e^{-D/2} \int_{-1}^{1} \left( \frac{1 - u}{1 + u} \right)^{\frac{D i}{2\pi}} du.
\]
Further,
let \( w = \frac{1 - u}{1 + u} \), then \( u = \frac{1 - w}{1 + w} \), \( du = -\frac{2}{(1 + w)^2} dw \).  
When \( u = -1 \), \( w = \infty \); when \( u = 1 \), \( w = 0 \).  
Thus,
\[
\int_{-1}^{1} \left( \frac{1 - u}{1 + u} \right)^{\frac{D i}{2\pi}} du = \int_{\infty}^{0} w^{\frac{D i}{2\pi}} \left( -\frac{2}{(1 + w)^2} dw \right) = \int_{0}^{\infty} w^{\frac{D i}{2\pi}} \frac{2}{(1 + w)^2} dw.
\]
Furthermore
\[
I = t e^{-D/2} \cdot 2 \int_{0}^{\infty} \frac{w^{\frac{D i}{2\pi}}}{(1 + w)^2} dw.
\]

Let \( k = \frac{D i}{2\pi} \). Then the last integral becomes
\[
J = \int_{0}^{\infty} \frac{w^{k}}{(1 + w)^2} dw.
\]
This is a Beta integral:
\[
J = B(k + 1, 1 - k) = \frac{\Gamma(k + 1) \Gamma(1 - k)}{\Gamma(2)} = \Gamma(k + 1) \Gamma(1 - k), \quad \text{since } \Gamma(2) = 1.
\]
Hence,
\[
J = \Gamma\left(1 + \frac{D i}{2\pi}\right) \Gamma\left(1 - \frac{D i}{2\pi}\right).
\]

Using the Gamma function identity,
\[
\Gamma(1 + z) \Gamma(1 - z) = \frac{\pi z}{\sin(\pi z)}
\]
with \( z = \frac{D i}{2\pi} \), we obtain
\[
\Gamma\left(1 + \frac{D i}{2\pi}\right) \Gamma\left(1 - \frac{D i}{2\pi}\right) = \frac{\pi \cdot \frac{D i}{2\pi}}{\sin\left( \pi \cdot \frac{D i}{2\pi} \right)} = \frac{\frac{D i}{2}}{\sin\left( \frac{D i}{2} \right)}.
\]
But \( \sin(i x) = i \sinh(x) \) and
\[
\sin\left( \frac{D i}{2} \right) = i \sinh\left( \frac{D}{2} \right).
\]
Therefore,
\[
J = \frac{\frac{D i}{2}}{i \sinh\left( \frac{D}{2} \right)} = \frac{D}{2 \sinh\left( \frac{D}{2} \right)}.
\]

Combining the calculations we get
\[
I = t e^{-D/2} \cdot 2 \cdot J = t e^{-D/2} \cdot 2 \cdot \frac{D}{2 \sinh\left( \frac{D}{2} \right)} = t e^{-D/2} \cdot \frac{D}{\sinh\left( \frac{D}{2} \right)}.
\]
Since
\[
\frac{1}{\sinh\left( \frac{D}{2} \right)} = \frac{2}{e^{D/2} - e^{-D/2}},
\]
we obtain
\[
I =\int_{-t}^{t} \frac{1}{X^+_t(s)}  ds = t e^{-D/2} \cdot D \cdot \frac{2}{e^{D/2} - e^{-D/2}} = \frac{2 t D e^{-D/2}}{e^{D/2} - e^{-D/2}} = \frac{2 t D}{e^{D} - 1}.
\]

To finish the calculation of $h_{11}$,
 
 $$h_{11}(t)=\pi \frac{\partial}{\partial t} \int_{-t}^t \psi_t=\pi \frac{\partial}{\partial t}\int_{-t}^{t} \frac{g}{X^+_t(s)} =\frac {2\pi gD}{e^D-1}=\frac{2\pi g\log G}{G-1}=\sqrt{\frac\pi 2}\frac 1{c_2}\log\left(\frac{c_1+c_2}{c_1-c_2}\right).$$
 

We will return to this example and calculate the full Hamiltonian (up to a constant) at the end of the next section.

\section{Homogeneous measures and spaces} \label{Hom}
\ms \subsection{Homogeneous spectral measure}  
For a measure $\mu(x)$ on $\R$ and $t>0$ we denote by $\mu_t(x)=\frac 1t\mu(tx)$ the measure such that for any Borel $B\subset \R$,
$$\mu_t(B)=\frac 1t\mu(tB).$$

A measure $\mu$ is homogeneous if 
$$\forall t>0,\qquad \mu_t(x)=\mu(x).$$
It is easy to show that a homogeneous measure must be absolutely continuous, $d\mu(x)=\rho(x)dx$,  where
$\rho(x)$  is constant on  $\R_+$ and on $\R_-$. We will assume that  the constants are strictly positive, so that   $\mu\in(\PW)$.
Note that all examples of such measures measures are given 
by \eqref{eq005}.

As usual we denote by $k_t(z)$ the reprokernels of the dB spaces $\PW_t(\mu)$ at zero.
\begin{theorem}   If $\mu\in (\PW)$ is homogeneous, then we have the identity
$$k_t(z)=tk_1(tz).$$
 \end{theorem}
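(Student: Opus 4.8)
The plan is to deduce the scaling identity from the homogeneity of $\mu$ by constructing an explicit isometric isomorphism between the spaces $\PW_1(\mu)$ and $\PW_t(\mu)$ and then invoking the uniqueness of reproducing kernels. The point is that homogeneity of $\mu$ is exactly the invariance that makes the natural dilation of entire functions norm-preserving.

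First I would introduce the dilation operator $(V_t f)(z)=\sqrt{t}\,f(tz)$ for $t>0$. At the level of entire functions this is a bijection from $\PW_1$ onto $\PW_t$: if $f$ has exponential type at most $1$ then $f(tz)$ has type at most $t$, and the inverse $g\mapsto g(\cdot/t)/\sqrt{t}$ sends $\PW_t$ back to $\PW_1$. The essential step, and the place where the hypothesis is actually used, is to check that $V_t$ is isometric for the $L^2(\mu)$ norms. Writing $d\mu=\rho\,dx$ with $\rho$ constant on each of $\R_\pm$, the substitution $w=tz$ gives $\rho(w/t)=\rho(w)$ and $dz=t^{-1}dw$, so that $\int|f(tz)|^2\,d\mu(z)=t^{-1}\|f\|_{L^2(\mu)}^2$; the factor $\sqrt{t}$ built into $V_t$ cancels the $t^{-1}$, and $V_t\colon\PW_1(\mu)\to\PW_t(\mu)$ is an isometric isomorphism. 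I expect this isometry computation, together with the exponential-type bookkeeping and the correct placement of the $\sqrt{t}$, to be the only real content of the argument; everything else is formal.

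Once $V_t$ is in hand, I would transport the reproducing property. For $g\in\PW_t(\mu)$ put $f=V_t^{-1}g\in\PW_1(\mu)$, so that $g(0)=\sqrt{t}\,f(0)$. Using the reproducing kernel $k_1$ of $\PW_1(\mu)$ and the isometry of $V_t$,
$$
g(0)=\sqrt{t}\,\langle f,k_1\rangle_{\PW_1(\mu)}=\sqrt{t}\,\langle V_tf,V_tk_1\rangle_{\PW_t(\mu)}=\langle g,\sqrt{t}\,V_tk_1\rangle_{\PW_t(\mu)}.
$$
Since also $g(0)=\langle g,k_t\rangle_{\PW_t(\mu)}$ for all $g$, uniqueness of the reproducing kernel forces $k_t=\sqrt{t}\,V_tk_1$, and evaluating $(\sqrt{t}\,V_tk_1)(z)=\sqrt{t}\cdot\sqrt{t}\,k_1(tz)=t\,k_1(tz)$ yields $k_t(z)=t\,k_1(tz)$.

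A second, Fourier-side route meshes more directly with Theorem~\ref{tTT} and could be used instead or as a cross-check. From $\mu_t=\mu$ one derives that $\hat\mu$ is homogeneous of degree $-1$, $\hat\mu(t\xi)=t^{-1}\hat\mu(\xi)$, which is consistent with $\hat\sigma(\xi)=\sqrt{2/\pi}\,(i\xi)^{-1}$ and $\hat m\propto\delta_0$. Setting $\phi_t(\xi):=\psi_1(\xi/t)$, the substitution $\eta=t\zeta$ in the convolution gives $(\phi_t\ast\hat\mu)(\xi)=(\psi_1\ast\hat\mu)(\xi/t)$, which equals $1$ exactly for $\xi\in(-t,t)$, while $\phi_t$ is supported in $[-t,t]$. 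Thus $\phi_t$ satisfies the system characterizing $\psi_t$ in Theorem~\ref{tTT}, and uniqueness of that solution (from injectivity of $\FF$ together with uniqueness of the reproducing kernel) gives $\psi_t(\xi)=\psi_1(\xi/t)$; inverting the Fourier transform returns $k_t(z)=t\,k_1(tz)$. In this variant the main obstacle is simply justifying that the system in Theorem~\ref{tTT} has a unique solution, whereas the first route sidesteps that by relying only on the standard uniqueness of reproducing kernels, so I would present the first approach as primary.
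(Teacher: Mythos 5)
Your proposal is correct and is essentially the paper's own argument: both rest on the observation that the dilation $f\mapsto\sqrt{t}\,f(t\,\cdot)$ is an isometry from $\PW_1(\mu)$ onto $\PW_t(\mu)$ precisely because $\rho(y/t)=\rho(y)$, and then identify $k_t$ with $t\,k_1(t\,\cdot)$ via the reproducing property. The only cosmetic difference is at the end, where the paper shows $K=k_1-\frac1t k_t(\cdot/t)$ is orthogonal to all of $\PW_1$ and sets $F=K$, while you invoke uniqueness of the reproducing kernel directly; these are the same fact.
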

 
 \ss\no{\bf Example.}  If $d\mu=dx$, then 
 $$k_t(z)\equiv k_t^0(z)=\frac1\pi~\frac {\sin tz}z,$$
 and 
 $$k_t^0(z)=tk_1^0(tz).$$ 
 \ms
 \begin{proof} Let $F\in\BB_1=\PW_1$ ("as sets"). Then $$G:=\sqrt t F(tz)\in \PW_t=\BB_t.$$
 Since
 $$F(0)=\frac1{\sqrt t}G(0),$$
 we have
 $$\int F(y)k_1(y)\rho(y)dy=(F,k_1)_{\BB_1}=\frac1{\sqrt t} (G,k_t)_{\BB_t}=$$
 $$\int F(tx)k_t(x)\rho(x)dx=\int F(y)k_t(y/t)\rho(y)\frac{dy}t$$
Denote
$$K(y)=k_1(y)-\frac1tk_t(y/t).$$
Then $ K\in \PW_1$ and 
$$\forall F\in \PW_1,\quad \int FK~d\mu=0.$$
Setting $F=K$ we get $K=0$.
 \end{proof}
 
\ms\no Following \cite{dB},  we say that a dB space $\BB=\BB(E)$ is homogeneous   if for all $t\in(0,1)$
$$F\in \BB\quad\Rightarrow\quad  t ^{1/2} F(tz)\in\BB$$
and both functions have the same norm in $\BB$.

\begin{theorem}\label{thm2}  $\mu\in(PW)$ is homogenous iff all its dB spaces are homogenous. \end{theorem}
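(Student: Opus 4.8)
The plan is to prove the two implications separately. The forward implication is a short computation that uses only the shape of a homogeneous $\mu$; the converse is the substantive half and rests on a uniqueness (density) lemma for the quadratic forms $F\mapsto\int|F|^2\,d\mu$.

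For the forward direction, suppose $\mu$ is homogeneous, so $d\mu=\rho\,dx$ with $\rho$ constant on $\R_+$ and on $\R_-$. Fix $s>0$ and $t\in(0,1)$. If $F\in\BB_s=\PW_s(\mu)$, then as a set $F(tz)$ has exponential type $ts\le s$, so $t^{1/2}F(tz)\in\PW_s$. It remains to check that the $L^2(\mu)$ norm is preserved; substituting $y=tx$ and using that $\rho(y/t)=\rho(y)$ for every $t>0$ (because $\rho$ depends only on $\sign y$), one gets
\[
t\int_\R|F(tx)|^2\rho(x)\,dx=\int_\R|F(y)|^2\rho(y)\,dy=\|F\|_{L^2(\mu)}^2 .
\]
Hence every $\BB_s$ is homogeneous. (Note this direction does not even require the kernel scaling of the previous theorem.)

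For the converse, assume every $\BB_s$ is homogeneous. Since the inclusion $\BB_s\subset L^2(\mu)$ is isometric, homogeneity of $\BB_s$ says precisely that $t\int_\R|F(tx)|^2\,d\mu(x)=\int_\R|F(x)|^2\,d\mu(x)$ for all $F\in\PW_s$ and all $t\in(0,1)$. Writing $\nu_t$ for the measure $\nu_t(B)=t\mu(B/t)$, the left-hand side is $\int|F|^2\,d\nu_t$, so the identity becomes
\[
\int_\R|F|^2\,d\nu_t=\int_\R|F|^2\,d\mu,\qquad \forall F\in\bigcup_{s>0}\PW_s,\ \forall t\in(0,1).
\]
The goal is then to upgrade this equality of quadratic forms to equality of measures, $\nu_t=\mu$, which by a direct change of variables is equivalent to $\mu_t=\mu$, i.e. to homogeneity of $\mu$. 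The engine is a uniqueness lemma: \emph{two $\Pi$-finite measures agreeing against $|F|^2$ for all $F\in\bigcup_s\PW_s$ must coincide.} To prove it, polarize to obtain $\int F\overline G\,d\mu_1=\int F\overline G\,d\mu_2$ for all $F,G$ in the linear space $\bigcup_s\PW_s$, then specialize to $F=e^{iaz}H$, $G=e^{ibz}H$ with $H$ a fixed real element of $\PW_\e$, say $H(z)=\sin(\e(z-x_0))/(z-x_0)$. On $\R$ this gives $F\overline G=e^{i(a-b)x}H^2$; since $H^2\lesssim(1+x^2)^{-1}$, the measure $H^2\,d\mu_j$ is finite, and the identity states that its Fourier transform is independent of $j$, whence $H^2\,d\mu_1=H^2\,d\mu_2$. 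Letting $x_0$ vary over $\R$ (so that $H(x_0)\ne0$ at each point) yields $\mu_1=\mu_2$.

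The main obstacle is exactly this lemma: one must confirm that the squares $|F|^2$ (equivalently the products $F\overline G$) are rich enough to separate $\Pi$-finite measures, while carefully tracking integrability so that the Fourier step is legitimate — this is where $\Pi$-finiteness of the spectral measure and the decay of the chosen $H$ enter. Once the lemma is established, applying it to $\nu_t$ and $\mu$ gives $\nu_t=\mu$ for every $t\in(0,1)$, hence $\mu_t=\mu$ for all $t>0$, which is the definition of homogeneity.
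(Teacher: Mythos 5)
Your forward direction coincides with the paper's: the substitution $y=tx$ plus the fact that $\rho$ depends only on $\sign y$ gives norm preservation, and the type estimate gives the set containment. The converse is where you genuinely diverge. Both arguments first reduce to the statement that $\mu$ and its rescaling induce the same quadratic form $F\mapsto\int|F|^2\,d\mu$ on every $\PW_s$; the paper then finishes in one line by observing that the rescaled measure is therefore also a spectral measure of the same det-normalized system and invoking uniqueness of the spectral measure (the limit-point case, automatic here since $\det\HH=1$ forces $\int\mathrm{trace}\,\HH=\infty$). You instead prove a self-contained separation lemma: polarization upgrades equality of the quadratic forms to equality of the sesquilinear forms $\int F\overline{G}\,d\mu_j$, and testing against $F=e^{iaz}H$, $G=e^{ibz}H$ with $H$ a real translated sinc shows that the finite measures $H^2\,d\mu_j$ have equal Fourier transforms, hence coincide; varying the center of $H$ then recovers $\mu_1=\mu_2$. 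Your route is longer but more elementary and self-contained --- it never invokes the limit-point/limit-circle dichotomy or the fact that a norm-equivalent measure is again a spectral measure --- and the lemma you isolate (a $\Pi$-finite measure is determined by the norms it induces on the Paley--Wiener chain) has independent interest. The integrability bookkeeping you flag ($H^2\lesssim(1+x^2)^{-1}$ against a $\Pi$-finite measure, and $e^{iaz}H\in\PW_{\e+|a|}$) is handled correctly. One cosmetic remark: your $\nu_t(B)=t\mu(B/t)$ is the paper's $\mu_{1/t}$, so concluding $\nu_t=\mu$ for $t\in(0,1)$ gives $\mu_s=\mu$ for $s>1$ and hence for all $s>0$, exactly as you note.
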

\begin{proof} 
	
Suppose first that $\mu$ is homogeneous.	Let $F\in \BB_a$ and $0<t<1$. Then
$F\in \PW_a$ and 
$$G:=\sqrt t F(tz)\in\PW_{ta}\subset \PW_a=\BB_a$$
(last equation means "equal as sets"). It remains to show
$$\|G\|_{\BB_a}=
\|F\|_{\BB_a},$$  or equivalently
$$\int |G|^2~d\mu=\int |F|^2~d\mu.$$
We have
$$\int |G|^2~d\mu=\int tF(tx)^2~\rho(x)dx=\int F(y)^2~\rho(y)dy,$$
which means that all $\BB_a$ are homogeneous.

Suppose now that all $\BB_a$ are homogeneous. Let us show that $\mu(x)=\mu(tx)$ for all positive $t$.
If $F\in PW_a$ then
$$\int |F(x)|^2d\mu(x)=\int t|F(tx)|^2d\mu(x)=\int |F(y)|^2d\mu_t(y).$$
Hence, the measures $\mu(x)$ and $\mu_t(x)$ define the same norms on every $PW_a$ for any $c>0$.
Now, let us consider a det-normalized canonical system with the spectral measure $\mu$. 
By the definition of the spectral measure, $\mu_t(x)$ is also a spectral measure for the same system for any $t>0$.
From the uniqueness of the spectral measure, $\mu(x)=\mu_t(x)$.

\end{proof}

\begin{corollary} Let $d\mu=\rho(x)dx$.
The following  three conditions are equivalent:

\ms\no (i) $\rho(x)=\rho(tx)$ for all $t\in (0,1)$;

\ms\no (ii) $\BB_t$ is homogeneous  for all $t>0$;

\ms\no(iii) $k_t(z)=tk_1(tz)$  for all $t\in (0,1)$.

\end{corollary}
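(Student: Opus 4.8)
The plan is to close a cycle of implications, of which two links are already in hand. The equivalence (i)$\Leftrightarrow$(ii) is precisely Theorem \ref{thm2}, once one notes that (i) is just the statement that $\mu$ is homogeneous (its density being invariant under all dilations means $\rho$ is constant on each half-line). The implication (i)$\Rightarrow$(iii) is the scaling identity $k_t(z)=tk_1(tz)$ established at the start of this section. Hence it remains only to prove one further implication, and I would aim for (iii)$\Rightarrow$(i).

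To prove (iii)$\Rightarrow$(i), I would start from the defining reproducing identity on $\PW_t(\mu)$, namely $\int_\R F(x)k_t(x)\,d\mu(x)=F(0)$ for every $F\in\PW_t$ (here $k_t$ is real on $\R$, so no conjugation is needed). Substituting the hypothesis $k_t(x)=tk_1(tx)$ and changing variables $y=tx$, $G(y):=F(y/t)\in\PW_1$, turns this into $\int_\R G(y)k_1(y)\rho(y/t)\,dy=G(0)$ for all $G\in\PW_1$. Subtracting the genuine scale-one identity $\int_\R G(y)k_1(y)\rho(y)\,dy=G(0)$ yields the orthogonality relation
\[
\int_\R G(y)\,k_1(y)\,\big[\rho(y/t)-\rho(y)\big]\,dy=0\qquad(G\in\PW_1).
\]
Since $\PW_1$ is the Fourier image of $L^2[-1,1]$ and is closed under conjugation on $\R$, this says exactly that the $L^2$-function $k_1\cdot(\rho(\cdot/t)-\rho)$ has Fourier transform vanishing on $(-1,1)$; writing $\phi_t=\rho(\cdot/t)-\rho$ and recalling from Theorem \ref{tTT} that $\psi_1=\hat k_1$ is supported in $[-1,1]$, this reads $(\psi_1\ast\hat\phi_t)|_{(-1,1)}=0$.

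The main obstacle is to deduce from this that $\phi_t\equiv 0$ a.e., i.e. that $\rho$ is constant on each half-line, which is condition (i). A single reproducing kernel at the point $0$ only "sees" $\phi_t$ through the weight $k_1$, so one scale is not obviously sufficient. To amplify the information I would run the same computation at every admissible scale: comparing $\mu$ with its dilate $\mu_{1/t}$ (whose density is $\rho(\cdot/t)$), the change-of-variables formula $k_s^{\mu_{1/t}}(z)=\tfrac1t\,k_{st}^{\mu}(z/t)$ together with the hypothesis gives $k_s^{\mu_{1/t}}=k_s^{\mu}$ for $s<1/t$, whence $\int_\R F\,k_s\,\phi_t\,dx=0$ for all $F\in\PW_s$, i.e. the Fourier transform of $k_s\phi_t$ vanishes on $(-s,s)$. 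Letting $t\to 0$, so that the intervals $(-s,s)$ exhaust $\R$ while $k_s(x)=sk_1(sx)$ concentrates at the origin, should force $\phi_t\to 0$ in a controlled way and deliver homogeneity of $\rho$. As an independent consistency check, once $\rho(\cdot/t)=\rho$ is known for all $t$ one recognizes $\mu_t=\mu$, in agreement with the uniqueness-of-spectral-measure argument already used in the proof of Theorem \ref{thm2}. I expect the genuinely delicate point to be the honest verification that the vanishing of these Fourier transforms forces $\phi_t\equiv 0$ globally, rather than only near the origin.
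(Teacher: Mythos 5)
Your identification of the two links that the paper actually supplies is accurate: condition (i) is precisely the statement that $\mu$ is homogeneous, so (i)$\Leftrightarrow$(ii) is Theorem \ref{thm2}, and (i)$\Rightarrow$(iii) is the scaling theorem proved at the start of Section \ref{Hom}. The paper states the corollary without any further argument, so the entire content of a proof lies in the remaining implication, and your proposal stakes everything on (iii)$\Rightarrow$(i). That is exactly where it does not close.

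The orthogonality relation you derive, $\int G\,k_1\,\bigl[\rho(\cdot/t)-\rho\bigr]=0$ for all $G\in\PW_1$, only says that the Fourier transform of $k_1\phi_t$ vanishes on $(-1,1)$; this is perfectly consistent with $\phi_t$ being a nonzero high-frequency perturbation, so by itself it proves nothing, as you acknowledge. The amplification you propose does not repair this. To obtain $k_s^{\mu_{1/t}}=k_s^{\mu}$ you must invoke hypothesis (iii) twice, once with parameter $st$ and once with parameter $s$, and since (iii) is assumed only for parameters in $(0,1)$ this forces $s<1$, not $s<1/t$ as you assert. Consequently the frequency intervals $(-s,s)$ on which you control $\widehat{k_s\phi_t}$ never grow beyond $(-1,1)$, no matter how small $t$ is; and ``letting $t\to 0$'' cannot help, because shrinking $t$ simultaneously changes the unknown function $\phi_t$ you are trying to annihilate. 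All the information your method extracts therefore lives in a fixed bounded band of frequencies and cannot exclude the possibility that $\rho(\cdot/t)-\rho$ oscillates at frequencies above $1$. Note also that Theorem \ref{t007} does not give an immediate shortcut here, since recovering $g^\mu$ requires $\mu$ itself and not just the kernels $k_t$. The gap you flag at the end is thus genuine and structural, not merely a matter of ``honest verification'': the implication (iii)$\Rightarrow$(i) needs an input beyond the scale-by-scale orthogonality relations, and the proposal as written does not supply one.
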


\ms\subsection{Quasi-homogeneuous spectral measures} By definition, a measure $\mu(x)$ is quasi-homogeneous of order $\nu$ if
for all $t>0$,
$$t^{1+2\nu}\mu(x)=\mu_t(x).$$
Once again, it is not difficult to prove that such measures are absolutely continuous. Their densities $\rho$ must satisfy
$$\forall t>0,\quad t^{1+2\nu}\rho(x/t)=\rho(x),$$
or equivalently 
$$\forall t>0,\quad t^{1+2\nu}\rho(y)=\rho(ty).$$

One can show that  quasi-homogeneous  measures form a two parameter family:
$$\rho(x)=\begin{cases}x^{1+2\nu}\rho(1),\quad x>0\\
| x|^{1+2\nu}\rho(-1),\quad x<0
\end{cases},$$
with arbitrary positive constants $\rho(\pm 1)$. As usual, a special case among spectral measures of canonical systems is occupied by even measures, which correspond to the case $\rho(1)=\rho(-1)$. 

The measures are not Paley-Wiener unless $\nu=-1/2$.  Quasi-homogeneous measures are locally finite (on $\R$) iff $\nu>-1$, and Poisson-finite,
$$
\int^\infty\frac{d\mu(x)}{1+x^2}<\infty,$$
iff $\nu<0$. For this reason, we'll be considering only the case 
$$-1<\nu<0.$$
The value $\nu=-1/2$ was discussed in the previous subsection.

Similarly to Theorem \ref{thm2}  one can prove:

\begin{theorem}\label{t10}
The following  conditions are equivalent:

\ms\no (i) $\rho(tx)=t^{1+2\nu}\rho(x)$ for all $t\in (0,1)$;

\ms\no (ii) all $\BB_t$ are homogeneous of order $\nu$ for all $t>0$;

\ms\no(iii) $k_t(z)=t^{2+2\nu}k_1(tz)$  for all $t\in (0,1)$

\end{theorem}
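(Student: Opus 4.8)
The plan is to follow the template of Theorem~\ref{thm2} and of the kernel identity $k_t(z)=tk_1(tz)$ proved above, adjusting every exponent to the order $\nu$. First I fix the meaning of condition (ii): a dB space $\BB=\BB(E)$ is \emph{homogeneous of order $\nu$} if for each $t\in(0,1)$ one has $F\in\BB\Rightarrow t^{1+\nu}F(t\,\cdot)\in\BB$, with equal $\BB$-norms. For $\nu=-1/2$ the exponent $1+\nu$ equals $1/2$, so this reduces to the notion used above and the statement specializes to Theorem~\ref{thm2} and its corollary. The computation that feeds every implication is the measure-scaling identity: assuming (i), i.e. $\rho(tx)=t^{1+2\nu}\rho(x)$, the change of variables $y=tx$ gives $\int|F(tx)|^2\,d\mu(x)=t^{-2-2\nu}\int|F(y)|^2\,d\mu(y)$, hence $\|t^{1+\nu}F(t\,\cdot)\|_{L^2(\mu)}=\|F\|_{L^2(\mu)}$. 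Since each $\BB_a$ embeds isometrically in $L^2(\mu)$, this already yields the norm part of (ii) and, read backwards, the identity needed for (ii)$\Rightarrow$(i).

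The genuine new content, and what I expect to be the main obstacle, is the \emph{set}-level self-similarity of the chain: that $F\mapsto F(t\,\cdot)$ carries $\BB_1$ onto $\BB_t$ (and, more generally, $\BB_a$ into itself). In Theorem~\ref{thm2} this was free because the spaces were Paley-Wiener and closed under dilation by type considerations; here the $\BB_a$ are not PW spaces and the membership must be earned. I would obtain it from the dilation symmetry of the canonical system: writing $M(s,z)$ for the transfer matrix of the det-normalized $\HH$ with spectral measure $\mu$, one checks directly that $M'(r,z):=M(r/\tau,\tau z)$ is the transfer matrix of the det-normalized Hamiltonian $\HH'(r)=\HH(r/\tau)$, whose HB functions are $E_{r/\tau}(\tau\,\cdot)$ and whose spectral measure is $\mu_\tau$. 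Quasi-homogeneity turns this into $\mu_\tau=\tau^{1+2\nu}\mu$, a constant multiple of $\mu$; combining this with uniqueness of the spectral measure (the system is in the limit point case, $\int\mathrm{tr}\,\HH\,dt=\infty$) identifies the dilated chain $\{F(\tau\,\cdot):F\in\BB_{r/\tau}\}$ with the chain of $\HH$ up to the stated normalization. This is exactly the order-$\nu$ analogue of the self-similarity of homogeneous de Branges spaces in~\cite{dB}, and it delivers all the set-memberships needed below.

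With self-similarity in hand the three implications close as in the even case. For (i)$\Rightarrow$(ii) the membership $t^{1+\nu}F(t\,\cdot)\in\BB_a$ comes from the previous paragraph and the norm equality from the scaling identity. For (ii)$\Rightarrow$(i) homogeneity of every $\BB_a$ rewrites, after the same substitution, as $\int|F|^2\,d\mu=\int|F|^2\,d(\tau^{1+2\nu}\mu_{1/\tau})$ for all $F$ in the chain, so that both $\mu$ and $\tau^{1+2\nu}\mu_{1/\tau}$ are spectral measures of the same det-normalized system; uniqueness forces $\mu=\tau^{1+2\nu}\mu_{1/\tau}$, which is (i). Finally, for (iii) I repeat the reproducing-kernel argument used for $k_t(z)=tk_1(tz)$: for $F\in\BB_1$ set $G=t^{1+\nu}F(t\,\cdot)\in\BB_t$, use $G(0)=t^{1+\nu}F(0)$ together with the reproducing property in $\BB_1$ and in $\BB_t$, and change variables via $\rho(y/t)=t^{-(1+2\nu)}\rho(y)$; this shows that $K(y):=k_1(y)-t^{-2-2\nu}k_t(y/t)$ lies in $\BB_1$ and is $L^2(\mu)$-orthogonal to all of $\BB_1$, whence $K\equiv0$ and $k_t(z)=t^{2+2\nu}k_1(tz)$. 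The reverse implication (iii)$\Rightarrow$(i) follows since the reproducing kernels span the chain, so the kernel scaling forces the norm-scaling identity that characterizes (i).
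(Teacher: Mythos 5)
The paper itself offers no proof of this theorem beyond the phrase ``Similarly to Theorem~\ref{thm2} one can prove,'' so the relevant comparison is with the proofs of Theorem~\ref{thm2} and of the identity $k_t(z)=tk_1(tz)$. Your proposal follows exactly that template, and you correctly isolate the one point where the template genuinely breaks: for $\nu\neq-1/2$ the measure is not Paley--Wiener, so the set identification $\BB_a\doteq\PW_a$ that the paper uses to get $F(t\,\cdot)\in\BB_{ta}\subset\BB_a$ for free is unavailable, and the dilation-invariance of the chain must be earned. Your fix --- the covariance $M(r/\tau,\tau z)$ is the transfer matrix of the det-normalized Hamiltonian $\HH(\cdot/\tau)$ with spectral measure $\mu_\tau=\tau^{1+2\nu}\mu$, so the dilated chain is again the chain of $\mu$ --- is the right idea and is a genuine improvement over the paper's silence. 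The exponent bookkeeping ($1+\nu$ in the isometry, $1+2\nu$ in the density, $2+2\nu$ in the kernel) is all correct and consistent with the $\nu=-1/2$ case.

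Two soft spots. First, the uniqueness you invoke to identify the dilated chain with the original one is misattributed: limit-point uniqueness gives the spectral measure from the Hamiltonian, whereas what you need here is the converse --- that a det-normalized Hamiltonian (equivalently, its chain of de Branges subspaces of $L^2(\mu)$, as sets) is determined by the spectral measure up to the one-parameter family mentioned at the end of Section~4, all members of which share the same spaces as sets. That is de Branges' ordering/uniqueness theorem for the inverse problem, not uniqueness of the spectral measure; the conclusion you draw is correct but rests on the other half of the Krein--de Branges correspondence. Second, your one-line argument for (iii)$\Rightarrow$(i), ``the reproducing kernels span the chain,'' does not work as stated: condition (iii) controls only the kernels at the single point $z=0$, and since $k_t=C_t/(\pi z)$ these functions are far from spanning $\BB_T$ (in the free case they are all even, for instance). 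Closing that implication requires either propagating the scaling from $k_t$ to the full kernel $K_t(\lambda,z)$, or arguing through the Hamiltonian (e.g.\ (iii) forces $C_t(z)=t^{1+2\nu}C_1(tz)$ and hence, via the system and det-normalization, a scale-covariant $\HH$, whose spectral measure must then be quasi-homogeneous by the covariance argument you already set up). As written, (iii)$\Rightarrow$(i) is a gap, albeit one the paper does not address either.
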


\ms\subsection{Solution of ISP for a homogeneous system in PW-case} We will use 
$$k_t(z)=tk_1(tz),$$
in particular,
$$k_t(0)=C_1t,\qquad C_1:=k_1(0),$$
to compute the functions $h^\mu(t)$ and  $g^\mu(t)$ (up to a constant). Obviously,
$$h^\mu(t)=\pi\dot k_t(0)=\pi C_1.$$
For the second function we apply the generalized Hilbert transform:
$$g^\mu(t)=\pi \dot l_t(0),\qquad l_t(0)=(T^\mu k_t)(0).$$ 
Recall
$$(T^\mu k_t)(0)=\frac1\pi\int\left[ \frac{k_t(x)-k_t(0)}x+\frac{xk_t(0)}{1+x^2} \right]   ~d\mu(x)$$
$$=\frac1\pi\int\left[ \frac{tk_1(tx)-tk_1(0)}x+\frac{txk_1(0)}{1+x^2} \right]   ~\rho(x)~dx$$
$$=\frac t\pi\int\left[ \frac{k_1(y)-k_1(0)}y+\frac{yk_1(0)}{y^2+t^2} \right]   ~\rho(y)~dy$$$$= At+\frac{ tC_1}\pi B,$$
where
$$A=\frac 1\pi\int\left[ \frac{k_1(y)-k_1(0)}y+\frac{yk_1(0)}{1+y^2} \right] ~\rho(y)dy$$
and
$$B=\int y\left[\frac1{y^2+t^2}-\frac1{1+y^2}\right]~\rho(y)dy.$$

\ms\no Thus $A$ and $C_1$ are constants but $B=B(t)$. In fact,
$$B=-\left[\rho(1)-\rho(-1)\right]\log t$$
and
$$g^\mu(t)=C-\frac 1\pi C_1\left[\rho(1)-\rho(-1)\right]\log t$$
for some constant $C$. 

For det-normalized systems, $$h_{22}=\frac{1-h_{12}^2}{h_{11}}=\frac{1-(C-C_2\log t)^2}{C_1},$$
where $C_2=\frac 1\pi C_1\left[\rho(1)-\rho(-1)\right]$.

This results in the Hamiltonian
$$\HH=\begin{pmatrix} C_1& C-C_2\log t\\C-C_2\log t&\frac{1-(C-C_2\log t)^2}{C_1}\end{pmatrix}.$$

Returning to the example of Section \ref{RH}, with the spectral measure $\mu$ defined by \eqref{eq005}, we obtain
the solution to the ISP, with a free  constant parameter $C$, by putting
$$C_1= \sqrt{\frac\pi 2}\frac 1{c_2}\log\left(\frac{c_1+c_2}{c_1-c_2}\right), C_2= \frac1{\sqrt{2\pi}}\log\left(\frac{c_1+c_2}{c_1-c_2}\right)$$
into the previous formula.

As was shown in \cite{etudes}, every spectral measure $\mu$ gives rise to a one-parameter family of Hamiltonians $\HH^\mu$. 
The above formula gives a general solution to the ISP  for $\mu$ defined by \eqref{eq005}.

\ms\section{Bessel functions and canonical systems} 

In this section we give an example of a direct spectral problem which involves families of Bessel functions.

\ms\subsection{Bessel's family}

\begin{lemma} Suppose $F(t)$ satisfies 
\begin{equation}t^2\ddot F(t)+t\dot F(t)+(t^2-\nu^2)F(t)=0\label{eq007}\end{equation}
for $t>0$. Let $\kappa>0, \beta>0$, $\alpha$ be given real numbers. Then the function
$$y(t)=t^\alpha F(\kappa t^\beta) $$
solves the equation
\begin{equation}\qquad t^2 \ddot y+at\dot y+(b+c^2t^{2\beta})y=0\label{eq006}\end{equation}
with
\begin{equation}\qquad a=1-2\alpha,\qquad  b=\alpha^2-\beta^2\nu^2, \qquad c^2=\beta^2\kappa^2.\label{eq008}\end{equation}
\end{lemma}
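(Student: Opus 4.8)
The plan is to reduce \eqref{eq006} back to Bessel's equation \eqref{eq007} by a direct change of variables, with the constants \eqref{eq008} engineered precisely so that the extraneous terms cancel. I introduce the inner variable $u=\kappa t^\beta$, so that $F$ in $y(t)=t^\alpha F(u)$ is evaluated at $u$, and $F',F''$ denote differentiation in $u$. Two elementary facts will drive everything: the chain-rule relation $\dot u=\kappa\beta t^{\beta-1}=\beta u/t$, and the identity $c^2t^{2\beta}=\beta^2\kappa^2 t^{2\beta}=\beta^2u^2$, which converts the oscillatory coefficient in \eqref{eq006} into the $u^2$ appearing in \eqref{eq007}.

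First I would compute the first derivative. Using $\dot u=\beta u/t$ one gets $\dot y=t^{\alpha-1}\bigl(\alpha F+\beta u F'\bigr)$, hence $t\dot y=t^{\alpha}\bigl(\alpha F+\beta u F'\bigr)$. Differentiating once more and again replacing $\dot u$ by $\beta u/t$, the second derivative collapses to the form
$$t^2\ddot y=t^{\alpha}\Bigl[(\alpha^2-\alpha)F+\beta(2\alpha-1+\beta)\,uF'+\beta^2u^2F''\Bigr].$$
The only mildly delicate point here is that differentiating the $uF'$ piece produces both an $F'$ and a $uF''$ contribution, which must be tracked carefully; everything else is bookkeeping.

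Next I would assemble the left-hand side of \eqref{eq006}. Writing $a t\dot y=(1-2\alpha)t^{\alpha}(\alpha F+\beta uF')$ and, via $c^2t^{2\beta}=\beta^2u^2$ together with $b=\alpha^2-\beta^2\nu^2$, writing $(b+c^2t^{2\beta})y=t^{\alpha}\bigl[(\alpha^2-\beta^2\nu^2)F+\beta^2u^2F\bigr]$, I collect the coefficients of $F$, of $uF'$, and of $u^2F''$ separately. The choice $a=1-2\alpha$ forces the $uF'$ coefficient to equal $\beta^2$; the $u^2F''$ coefficient is already $\beta^2$; and the pure-$F$ coefficient telescopes to $-\beta^2\nu^2$, with the remaining $\beta^2u^2F$ contributed by the $c^2t^{2\beta}$ term. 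Thus the entire expression factors as
$$t^2\ddot y+at\dot y+(b+c^2t^{2\beta})y=\beta^2t^{\alpha}\bigl[u^2F''+uF'+(u^2-\nu^2)F\bigr],$$
and the bracket vanishes because $F$ satisfies \eqref{eq007} at the point $u$. This completes the proof.

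The \emph{main obstacle} is nothing conceptual but rather the accurate collection of the $F$, $uF'$, and $u^2F''$ terms after the second differentiation: the coefficients $a$, $b$, $c^2$ in \eqref{eq008} are exactly the ones that make the non-Bessel contributions cancel, so any slip in the chain rule would spoil the clean factorization. I would therefore organize the computation by grouping according to $F,F',F''$ from the outset rather than expanding blindly.
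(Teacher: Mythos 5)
Your computation is correct: the chain-rule bookkeeping for $t\dot y$ and $t^2\ddot y$, the cancellations forced by $a=1-2\alpha$ and $b=\alpha^2-\beta^2\nu^2$, and the final factorization $\beta^2 t^{\alpha}\bigl[u^2F''+uF'+(u^2-\nu^2)F\bigr]$ all check out. The paper states this lemma without proof, treating it as a direct calculation, and your argument is exactly that standard verification, so there is nothing further to compare.
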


Let $J_\nu$ denote the Bessel function of the first kind. Then $J_\nu$ solves \eqref{eq007} for all $t\in \R$ and is finite at $0$, which can be taken as the definition of $J_\nu$.

\begin{remark}
$\ $

(i) If we want to solve  \eqref{eq006} with given $a, b, c, \beta$, then we can use \eqref{eq008} to find $\alpha, \kappa, \nu^2$. The general solution of \eqref{eq006} is then
$${\rm span} \left\{t^\alpha J_\nu(\kappa t^\beta),\;
t^\alpha J_{-\nu}(\kappa t^\beta)\right\},$$
assuming $\nu\not\in\Z$.

\ms\no(ii)  Example (the special case that we use in this section):
$$t^2\ddot y+at\dot y+c^2t^2 y=0$$
with parameters $a$ and $c>0$ (and also  $\beta=1$, $b=0$).  We find 
$$\alpha=\frac{1-a}2,\qquad \nu=\alpha,\qquad \kappa=c.$$ General solution:
$${\rm span} \left\{t^\alpha J_\alpha(c t),\;
t^\alpha J_{-\alpha}(c t)\right\}.$$


\end{remark}

\ms\subsection{Bessel canonical system} For each $m>0$ we consider the system with the Hamiltonian
$$H(t)=\begin{pmatrix}
   h(t)   & 0 \\
     0& 1/ h(t)
\end{pmatrix}, \qquad h(t)=t^m,\quad t>0.$$
Note that the system is {\it regular} iff $m<1$.  As usual we introduce the functions $A=A(t,z)$ and $C=C(t,z)$:
$$\Omega\dot X=zHX,\qquad X=(A, C)^\tau,$$
i.e.,
$$\dot C=zhA,\quad -\dot A=\frac z h C,$$
with "initial values"
$$A(0, z)=1, \qquad C(0, z)=0$$
(as limits when $t\to 0$). Rewriting the system as a second order equation for $C$ we get
$$\ddot C-\frac mt\dot C+z^2C=0$$ 
with initial conditions
$$C(0,z)=0,\qquad \dot C(t,z)\sim zt^m\text{ as }t\to 0.$$
For the time being we'll only consider $z\in\R$. According to the example in the last subsection the general solution for the second order 
equation is
$${\rm span} \left\{t^\nu  J_\nu(z t),\;
t^\nu J_{-\nu}(z t)\right\},\qquad \nu=\frac{1+m}2.$$

Consider the function $F_\nu$ defined by
$$J_\nu(\lambda)=\lambda^\nu F_\nu(\lambda).$$
It is known that $F_\nu$ is an entire function and
$$F_\nu(0)=\frac1{2^\nu\Gamma(\nu+1)},\qquad F'_\nu(0)=0.$$
If we fix $z$ and let $t\to 0$, then we have
$$t^\nu J_\nu(zt)=t^{2\nu}z^\nu F_\nu(zt)\sim F_\nu(0) z^\nu t^{2\nu}$$
and 
$$t^\nu J_{-\nu}(zt)=z^{-\nu }F_{-\nu}(zt) \to z^{-\nu}F_{-\nu}(0).$$
From the condition $C(0,z)=0$ it follows that
$$C(t,z)=G(z)t^\nu J_\nu(zt),$$
It remains to find $G(z)$. 

\ms\no We have 
$$C(t,z) \sim G(z)  F_\nu(0) z^\nu t^{2\nu},$$
and
$$ \dot C(t,z) \sim G(z)  F_\nu(0) z^\nu ~ 2\nu t^{2\nu-1}.$$
Combining with the second boundary condition, 
$
 \dot C(t,z)\sim zt^m$, we have
$$zt^m\sim G(z)  F_\nu(0) z^\nu ~ 2\nu t^{2\nu-1},$$
 and therefore (recall that $m=2\nu-1$)
 $$ G(z)=\frac{z^{1-\nu}}{F_\nu(0) ~2\nu}=g_\nu z^{1-\nu},\qquad g_\nu:=2^{\nu-1}\frac{\Gamma(1+\nu)}\nu=2^{\nu-1}\Gamma(\nu).$$

 Thus
 $$C(t,z)=g_\nu t^{2\nu}z F_\nu(zt).$$
 For each fixed $t$, $C$ is entire with respect to $z$. We arrive at
 $$C=g_\nu(t^{2\nu} z F_\nu(zt)),$$
 and 
 $$A=g_\nu(2\nu F_\nu(zt)+tz F'_\nu(zt)).$$
Indeed, we have
 $$\dot C=g_\nu(2\nu t^{2\nu-1} zF_\nu(zt)+t^{2\nu}z^2 F'_\nu(zt)),$$
 and, since $m=2\nu-1$,
 $$A=g_\nu\frac{\dot C}{zt^m}=g_\nu(2\nu F_\nu(zt)+tz F'_\nu(zt)).$$

 \ms\no We can simplify our expression for $A$ to obtain:
 
 \begin{theorem} $$A(t,z)=g_\nu F_{\nu-1}(zt)$$
 	
 	and
 	
 	$$C(t,z)=g_\nu t^{2\nu}z F_\nu(zt).$$
 \end{theorem}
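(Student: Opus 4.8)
The plan is to treat the two asserted formulas asymmetrically, since the expression for $C$ has in fact already been obtained in the computation preceding the statement, namely $C(t,z)=g_\nu t^{2\nu}zF_\nu(zt)$. Thus nothing remains to be done for $C$, and the entire content of the theorem lies in simplifying the formula for $A$. Starting from $A=g_\nu\dot C/(zt^m)$, which with $m=2\nu-1$ was shown to give $A=g_\nu\bigl(2\nu F_\nu(zt)+tzF'_\nu(zt)\bigr)$, the claim reduces to the single-variable identity
$$2\nu F_\nu(\lambda)+\lambda F'_\nu(\lambda)=F_{\nu-1}(\lambda),$$
where $F'_\nu$ denotes the derivative in the argument and $\lambda=zt$ (so that $tzF'_\nu(zt)=\lambda F'_\nu(\lambda)$). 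Hence the whole problem collapses to this one recurrence for the normalized Bessel functions $F_\nu$.

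To prove the identity I would multiply both sides by the integrating factor $\lambda^{2\nu-1}$ and recognize the left-hand side as an exact derivative,
$$\lambda^{2\nu-1}\bigl(2\nu F_\nu(\lambda)+\lambda F'_\nu(\lambda)\bigr)=\frac{d}{d\lambda}\bigl(\lambda^{2\nu}F_\nu(\lambda)\bigr).$$
I would then rewrite both sides in terms of the genuine Bessel functions using $J_\nu(\lambda)=\lambda^\nu F_\nu(\lambda)$ and $J_{\nu-1}(\lambda)=\lambda^{\nu-1}F_{\nu-1}(\lambda)$: on the left $\lambda^{2\nu}F_\nu(\lambda)=\lambda^\nu J_\nu(\lambda)$, and on the right $\lambda^{2\nu-1}F_{\nu-1}(\lambda)=\lambda^\nu J_{\nu-1}(\lambda)$. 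The identity to be proved thereby becomes the classical Bessel derivative relation
$$\frac{d}{d\lambda}\bigl(\lambda^\nu J_\nu(\lambda)\bigr)=\lambda^\nu J_{\nu-1}(\lambda),$$
equivalently $\nu J_\nu+\lambda J'_\nu=\lambda J_{\nu-1}$, which is one of the standard recurrences for $J_\nu$. Substituting back gives $A=g_\nu F_{\nu-1}(zt)$ and closes the argument.

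The only point needing a word of care is the range of $\lambda$. The classical recurrence is usually quoted for real $\lambda>0$, whereas the system requires the $F$-identity for all $\lambda=zt$; but $F_\nu$ and $F_{\nu-1}$ are entire, so both sides of $2\nu F_\nu(\lambda)+\lambda F'_\nu(\lambda)=F_{\nu-1}(\lambda)$ are entire functions agreeing on a set with a limit point, hence everywhere by analytic continuation. There is thus no genuine obstacle: the main work is purely bookkeeping, the one insight being that $\lambda^{2\nu-1}$ is the integrating factor that converts the $F$-recurrence into the textbook identity for $\lambda^\nu J_\nu$.
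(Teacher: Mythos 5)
Your proposal is correct and follows essentially the same route as the paper: both treat the formula for $C$ as already established and reduce the statement about $A$ to the single identity $2\nu F_\nu(\lambda)+\lambda F_\nu'(\lambda)=F_{\nu-1}(\lambda)$, which both verify using the classical recurrence $\frac{d}{d\lambda}\bigl(\lambda^\nu J_\nu(\lambda)\bigr)=\lambda^\nu J_{\nu-1}(\lambda)$ (the paper packages it as $x^\nu F_\nu'=J_{\nu-1}-2\nu x^{-1}J_\nu$ and multiplies by $x^{\nu-1}$, while you multiply by the integrating factor $\lambda^{2\nu-1}$ --- the same computation). Your closing remark on analytic continuation to complex arguments is a harmless extra precaution the paper omits.
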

 
 \begin{proof}  As we know
 $$A=H(zt),\qquad H(x):= g_\nu(2\nu F_\nu(x)+xF'_\nu(x)).$$
 We will use the standard relation $$x^\nu F'_\nu=J_{\nu-1}-2\nu x^{-1}J_\nu$$
 (both sides are equal to $-J_{\nu+1}$). We have
 $$x^{\nu-1}H/g_\nu =2\nu x^{\nu-1}F_\nu+x^\nu F'_\nu$$$$=2\nu x^{\nu-1}F_\nu+J_{\nu-1}-2\nu x^{-1}J_\nu
 $$$$=2\nu x^{\nu-1}F_\nu+J_{\nu-1}-2\nu x^{\nu-1}F_\nu =J_{\nu-1},$$
 and $H=g_\nu F_{\nu-1}$.
 \end{proof} 
 
 \ms\no Let $k_t(z)$ be the reproducing kernel of $\BB(E_t)$, $E:=A-iC$, at zero:
 $$k_t(z)=\frac{C(z)A(0)}{\pi z}=\frac{g_\nu^2 F_{\nu-1}(0)}\pi t^{2\nu} F_\nu(zt).$$
 
 Together with Theorem \ref{t10}, this relation implies that the spectral measure $\mu$ of the system is quasi-homogeneous of order $\nu-1=(m-1)/2$. The diagonal form of the Hamiltonian implies that $\mu$ is even. Altogether, 
 $$\mu=\const |x|^{m}.$$
 
 As we can see, $\mu\not\in \PW$.
 
 \ms\no In conclusion, let us verify our computations by 'deriving' the Hamiltonian. We will apply \eqref{eq003A}, even though
 the system does not satisfy the PW-requirement. The correctness of the  answer obtained  suggests broader use of Theorem \ref{t3} and similar formulas.
 
 According to our previous calculations,
 $$\pi k_t(0)=\frac{g_\nu^2 F_{\nu-1}(0)}\pi t^{2\nu} F_\nu(0)= \frac{1}{2\nu\pi} t^{2\nu}.$$ 
 Via \eqref{eq003A},
 $$h_{11}(t)=\pi\dot k_t(0)=t^{2\nu-1}=t^m.$$

\end{document}